\UseRawInputEncoding
\documentclass[12pt, leqno]{article}
\usepackage{latexsym}
\usepackage{enumerate,amsmath,amsthm,amssymb,amsfonts,amscd}

\newtheorem{defn}{Definition}[section]
\newtheorem{thm}[defn]{Theorem}
\newtheorem{cor}[defn]{Corollary}

\newtheorem{lem}[defn]{Lemma}

\numberwithin{equation}{section}

\def\F{\mbox{${\cal F}$}}

\newcommand{\cc}{\mathbb C}
\newcommand{\zz}{\mathbb Z}
\newcommand{\nn}{\mathbb N}

\input{arrow.tex}

\begin{document}
\title{\bf On an affirmative solution to Michael's problem, with applications to automatic continuity theory}

\author{S. R. PATEL}

\maketitle \vspace{1.5cm}

\noindent Department of Mathematics, C. U. Shah University, Wadhwan City,\\
Gujarat, INDIA\\
E-mail: srpatel.math@gmail.com
\begin{tabbing}
\noindent {\bf 2020 Mathematics Subject Classification:} \= Primary: 46J05;\\
\> Secondary: 13F25, 32A10,\\
\> 46A32, 46E25, 46H10, 46H40
\end{tabbing}
\noindent {\bf Keywords:} Fr\'{e}chet algebras of power series,
Michael's problem,\linebreak test algebra, topological tensor
(symmetric) algebra over a Banach space, (dis)continuous
homomorphism, (dis)continuous (higher point) derivation.
\newpage
\noindent {\bf Abstract} In 1952, Michael posed a question about
functional continuity of commutative Fr\'{e}chet algebras in his
memoir, known as Michael's problem in the literature. We settle
this in the affirmative along with its equivalent forms. Indeed,
we continue our previous works to directly attack these problems.
We show that the test case for this problem -- the Fr\'{e}chet
algebra $\cal U$ of all entire functions on $\ell^\infty$ -- is,
in fact, a Fr\'{e}chet algebra $\cc[[X]]$, if there exists a
discontinuous character on $\cal U$. An important tool is a
topological version of the tensor (symmetric) algebra over a
Banach space. The elementary, but crucial, idea is to express the
test algebra $\cal U$ as a {\it weighted Fr\'{e}chet symmetric
algebra} over the Banach space $\ell^1$. We summarize the effects
of our affirmative solution in terms of important applications in
automatic continuity theory, including affirmative answers to two
significant, long-standing problems from 1966 and 1978.
\newpage
\section{Introduction and statement of the main theorem}
{\bf 1.1. History of Michael's problem.} An important subject of
study in the theory of Fr\'{e}chet algebras is certain questions
from automatic continuity theory. Viz., we may consider the
(non-)uniqueness of the Fr\'{e}chet algebra topology (briefly: the
Fr\'{e}chet topology in the sequel) on certain Fr\'{e}chet
algebras, (dis)continuity property of certain homomorphisms
between certain Fr\'{e}chet algebras as well as that of (higher
point) derivations on Fr\'{e}chet algebras. This theory has
applications to commutative rings and algebras (e.g.,
Singer-Wermer conjecture (1955) in Fr\'{e}chet algebras ~\cite{37,
35}) and to the theory of SCV (e.g., Corona problem (1962)
~\cite{30} and Gleason's problem (1964) ~\cite{36}).

In 1952, Michael posed a question about functional continuity of
commutative Fr\'{e}chet algebras ~\cite{29}. This problem has a
very strong historical background. A number of significant
analysts worked on this problem since 1952. As consequences, there
are affirmative solutions for special classes of Fr\'{e}chet
algebras (e.g., Arens ~\cite{5}, Shah ~\cite{39}, Ephraim
~\cite{19}, \.Zelazko ~\cite{43}, Akkar, Oubbi and Oudadess
~\cite{2}). Some analysts discussed various test cases (e.g.,
Clayton ~\cite{9}, Dixon and Esterle ~\cite{17}, Dales, author and
Read ~\cite{15}), or discussed various approaches (e.g., Dixon and
Esterle ~\cite{17}, Esterle ~\cite{20}, Dales, author and Read
~\cite{15}). Other analysts discussed various equivalent forms
(e.g., Craw~\cite{10}, Dixon and Fremlin ~\cite{18},
Akkar~\cite{1}).

Despite a lot of efforts by analysts to solve Michael's problem,
it seems that only six significant ideas appeared in the
literature since 1952 ~\cite{5, 39, 9, 17, 20, 15}. The strong
partial result was obtained by Arens in 1958 ~\cite{5}. He showed
that each finitely (resp., rationally) generated, commutative
Fr\'{e}chet algebra is functionally continuous. As far as we know,
the most latest effort was made by Dales, author and Read in 2010.
We showed that a well known test case for this
problem---Fr\'{e}chet algebra $\cal U$ of all entire functions on
$\ell^\infty$ ~\cite{9, 17, 20}---is, in fact, a Fr\'{e}chet
algebra of power series (briefly: FrAPS) ~\cite{15}. So, the
natural question of whether every character (i.e., non-zero
complex homomorphism) is automatically continuous on a Fr\'{e}chet
algebra which is a FrAPS, is Michael's problem itself in disguise.
Incidentally, author asked \.Zelazko in 2004 whether Michael's
problem has an affirmative solution for {\it all} FrAPS [Personal
Communication]. Obviously, an affirmative answer to this question
would extend the Arens' result within the class of FrAPS, thereby
solving Michael's problem in view of ~\cite[Theorem 10.1
(ii)]{15}.\\

\noindent {\bf 1.2. Our past work as a background.} So far, we
have established the uniqueness of the Fr\'{e}chet topology for
{\it all} FrAPS in one indeterminate ~\cite{31} and in finitely
many indeterminates ~\cite{33}. Dales, author and Read have also
obtained an affirmative solution (in a stronger form) to
Dales-McClure's problem (1977) among other interesting automatic
continuity results, including a reduction of Michael's problem as
discussed above. We have used discontinuity of derivations to give
other inequivalent Fr\'{e}chet topology to certain Fr\'{e}chet
algebras, solving Loy's problem (1974) in the Fr\'{e}chet case
~\cite{34}. Also, by using the Read's method, we have constructed
two maiden examples of Fr\'{e}chet algebras ($\F_\infty
\,\textrm{and}\, \F_\infty\,\oplus\,\F_\infty$) with countably
many mutually inequivalent Fr\'{e}chet topologies, to discuss
various avatars of the Singer-Wermer conjecture in Fr\'{e}chet
algebras ~\cite{37, 35}. Such examples are not known even in the
Banach algebra case. As we shall see below, {\it all} these works
have turned out to be stepping stones for the complete solution to
Michael's problem (e.g., see ~\cite[Question, p. 135]{31} and
~\cite[Theorem 11.2]{15}).

Since 1958, it was not clear to functional analysts how to extend
the Arens' result from the finitely generated case to the
countably generated case, in order to obtain functional continuity
of the test algebra $\cal U$. One strongly believed that there
must have some ways to apply appropriate methods within automatic
continuity theory as the problem falls in this theory. So, we
represent the first approach in this paper; in the forthcoming
paper, we exclusively discuss the second approach for the
non-commutative case of Michael's problem. In both the approaches,
an important tool is a topological version of the tensor
(symmetric) algebra over a Banach space. The elementary, but
crucial, idea is to express the test algebra $\cal U$ as a {\it
weighted Fr\'{e}chet symmetric algebra} over the Banach space
$\ell^1$ ~\cite{15, 34}.\\

\noindent {\bf 1.3. Preliminaries.} Throughout the paper,
``algebra" will mean a non-zero, complex commutative algebra with
identity unless otherwise specified. A topological algebra $A$ is
functionally continuous if every character on $A$ is continuous,
and $A$ is functionally bounded if every character on $A$ is
bounded on bounded sets. A {\it Fr\'{e}chet algebra} is a
complete, metrizable locally convex algebra $A$ whose topology
$\tau$ may be defined by an increasing sequence $(p_m)_{m\in \nn}$
of submultiplicative seminorms. The basic theory of Fr\'{e}chet
algebras was introduced in ~\cite{29, 23}. If $A$ is a Fr\'{e}chet
algebra, then it is functionally continuous if and only if it is
functionally bounded ~\cite[Theorem 1]{18}. However, there are
commutative, complete LMC-algebras which are functionally bounded,
but not functionally continuous; e.g., see ~\cite[pp. 52-53]{29}.

We write $\F_\infty$ for the algebra $\cc[[X_1, X_2, \dots]]$ of
all formal power series in countably many commuting indeterminates
$X_1, X_2, \dots$, with complex coefficients. We shall also
require the non-commutative version of $\F_\infty$, denoted by
$\cal B$ = $\cc_{nc}[[X_1, X_2, \dots]]$. A fuller description of
these algebras is given in ~\cite[\S 9]{15}; see also ~\cite{20}
for $\F_\infty$. The algebra $\F_\infty$ is a Fr\'{e}chet algebra
under the usual topology $\tau_c$ of coordinatewise convergence
defined by the coordinate projections $\pi_I, I \in
(\zz^+)^{<\omega}$. The defining sequence $(p_m^{'})$ of seminorms
is: $$p_m^{'}(\sum_{J \in (\zz^+)^{<\omega}}\lambda_J X^J) =
\sum\{\mid \lambda_J \mid : J \in (\zz^+)^m, \mid J\mid \leq
m\}\;(m \in \nn).$$ Clearly, each $p_m^{'}$ is a proper seminorm
on $\F_\infty$.

Some remarks on $\F_\infty$ are in order. The algebra $\F_\infty$
(resp., $\cal B$) is a graded algebra. In ~\cite{20}, Esterle
remarks that $\F_\infty$ is a local algebra with its unique
maximal ideal $\textrm{ker}\,\phi_0$, $\phi_0$ the unique
character of $\F_\infty$. Also, $\F_\infty$ is an integral domain,
with all its principal ideals closed, but that he does not know
whether or not all finitely generated ideals in $\F_\infty$ are
closed. Kopp shows that $\F_\infty$ is a Fr\'{e}chet algebra of
finite type and so, all finitely generated ideals are closed in
$\F_\infty$ ~\cite[Remark, p. 222]{25}. It was shown that there is
a continuous, injective homomorphism from $\F_\infty$ into $\F_2$
as an extension of ~\cite[Theorem 2.2]{15} (see ~\cite[Theorem
9.1]{15}). Also, the algebra $\F_k$ cannot be embedded
algebraically in $\F_1 = \F$ for each $k \geq 2$ ~\cite[Theorem
2.6]{15} (see ~\cite[Theorem 11.8]{15} for the second proof using
some topological arguments). The similar arguments for $\F_\infty$
allows us to establish the following
\begin{thm} \label{Thm 2.6_DPR} There is neither topological nor algebraic embedding of
$\F_\infty$ into $\F$.
\end{thm}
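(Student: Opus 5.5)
Since $\F_2=\cc[[X_1,X_2]]$ sits inside $\F_\infty$ as the closed subalgebra of power series in the first two indeterminates, the whole statement reduces to the case $k=2$ of \cite[Theorem 2.6]{15}. Concretely, I will show that any algebraic (resp.\ topological) embedding $\theta:\F_\infty\to\F$ restricts to an algebraic (resp.\ topological) embedding of $\F_2$ into $\F$, which is impossible. The topological claim then follows from the algebraic one, because a topological embedding is in particular an injective algebra homomorphism. I will still record the topological argument along the lines of \cite[Theorem 11.8]{15}, since the paper mentions that route.

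\textbf{Step 1 (inclusion).} Let $\iota:\F_2\to\F_\infty$ be given by $\iota(\sum\lambda_J X^J)=\sum\lambda_J X^J$, reading $X_1,X_2$ as the first two indeterminates of $\F_\infty$. This map is an injective unital homomorphism. It is continuous for the coordinatewise topologies: each seminorm $p_m'$ of $\F_\infty$ restricted to the image of $\iota$ is dominated by the corresponding coordinate seminorm on $\F_2$, since it only sums finitely many coefficients with $|J|\le m$ supported on $X_1,X_2$. Its image is closed, because it is the common kernel of the coordinate projections $\pi_I$ over all multi-indices $I$ involving some $X_j$ with $j\ge3$. Hence $\iota$ is a topological embedding.

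\textbf{Step 2 (restriction).} Suppose $\theta:\F_\infty\to\F$ were an injective homomorphism. Then $\theta\circ\iota:\F_2\to\F$ is an injective homomorphism, which contradicts \cite[Theorem 2.6]{15} with $k=2$. So no algebraic embedding exists, and therefore no topological one.

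\textbf{Step 3 (independent topological check).} As a sanity check for the topological case, a continuous injection $\theta$ must map the unique maximal ideal $\ker\phi_0$ into $X\F$. Writing $\theta(X_i)=X^{n_i}u_i$ with $u_i$ a unit, one can compare the two-generated subalgebra: the images of $X_1^{n_2}$ and $X_2^{n_1}$ differ by a unit factor. Using continuity and Hensel-type extraction of roots in $\F$, one then produces a nonzero element of $\ker\theta$ in $\F_2$, essentially $X_1^{n_2}-cX_2^{n_1}$ corrected by a power series, contradicting injectivity.

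The only real obstacle is Step 3, and only if one avoids quoting \cite[Theorem 2.6]{15}. The difficulty there is constructing the kernel element, which requires an implicit-function/Weierstrass argument in $\cc[[X_1,X_2]]$. I rely on the cited theorem instead.
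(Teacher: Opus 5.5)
Your proof is correct, and it takes a genuinely different, much shorter route than the paper. You use only the inclusion $\F_2\subset\F_\infty$ and the case $k=2$ of \cite[Theorem 2.6]{15}, which the paper itself quotes just before the statement. An injective homomorphism $\theta:\F_\infty\to\F$ would restrict to an injective homomorphism $\F_2\to\F$. Since a topological embedding is in particular an injective homomorphism, both assertions follow at once. Unitality is automatic, because $\theta(1)$ is a non-zero idempotent in the integral domain $\F$.

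The paper instead works with $\F_\infty$ directly. In the topological case it rules out surjectivity by comparing the numbers of Fr\'echet topologies on $\F$ and $\F_\infty$ \cite{4,37}. It then uses Kopp's theory of algebras of finite type \cite{25} to see that the image is a closed subalgebra which is a FrAPS. Next it appeals to \cite[Theorem 3.3, Corollary 4.2]{31} to get a defining sequence of norms equivalent to $\tau_c$, and it obtains a contradiction from the fact that the $p_m'$ are proper seminorms. The algebraic case is then reduced to the topological one through the automatic continuity theorem \cite[Theorem 11.2]{15}.

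The paper's route aims at an intrinsic argument modelled on the topological proof of \cite[Theorem 11.8]{15}. Yours needs no topology and none of those external inputs, only one purely algebraic fact about $\F_2$.

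Your Steps 1 and 3 are dispensable. The closedness and continuity of $\iota$ play no role in Step 2, and Step 3 as written is only a sketch. If you want it self-contained, proceed as follows. First, $\theta(\ker\phi_0)\subset X\F$ holds without continuity: for $f\in\ker\phi_0$ and $\lambda\neq 0$, $f-\lambda$ is invertible. Next, $\theta$ restricted to $\F_2$ is then the substitution $F\mapsto F(g_1,g_2)$ with $g_i=\theta(X_i)$. Finally, for $g_1\neq 0$ the ring $\F$ is a free $\cc[[g_1]]$-module of rank $o(g_1)$, so $g_2$ satisfies a monic polynomial over $\cc[[g_1]]$. This yields a non-zero element of $\cc[[X_1]][X_2]$ in $\ker\theta$.
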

\noindent {\it Proof.} Assume towards a contradiction that $\theta
: \F_\infty \rightarrow \F$ is such a topological embedding. Then
$\theta$ is not a surjection, for this would imply that $\F_\infty
\cong \F$ topologically, and this is impossible because $\F$ has a
unique Fr\'{e}chet topology ~\cite[Example 1, p. 276]{4} whereas
$\F_\infty$ has two inequivalent Fr\'{e}chet topology ~\cite{37}.
Hence $\theta$ is a proper embedding, and so we may regard $A :=
\theta(\F_\infty)$ as a Fr\'{e}chet subalgebra of $\F$. In fact,
since $\F_\infty$ is a Fr\'{e}cht algebra of finite type and
$\theta$ is continuous, by ~\cite[Theorem 1]{25}, $A$ is a closed
subalgebra of $\F$. Since $\textrm{ker}\,\phi_0$ is a unique
closed maximal ideal in $\F_\infty$, its image
$\theta(\textrm{ker}\,\phi_0)$ is also a unique closed maximal
ideal in $A$ ($A$ being local as $\F_\infty$ is). Further
$\pi_0|_A$ is a continuous character on $A$, and so $AX$ is also a
closed maximal ideal in $A$. On the other hand, by ~\cite[p.
218]{25}, $\textrm{ker}\,\phi_0$ is a radical Fr\'{e}chet algebra
of finite type, and so its image $\theta(\textrm{ker}\,\phi_0)$ is
closed in $\F$. Since $A$ is a closed subalgebra of $\F$,
$\theta(\textrm{ker}\,\phi_0)$ is also closed in $A$. Since $A$ is
a local algebra, we have $AX = \theta(\textrm{ker}\,\phi_0)$. Thus
$A$ is, indeed, a FrAPS in $\F$ since it is a closed subalgebra
containing $X$. We may assume, w.l.o.g., that $\theta(X_1) = X$.

By ~\cite[Theorem 3.3]{31}, the topology of $A$ is given by a
sequence of norms, say by the sequence $(\|\cdot\|_n)$. By
~\cite[Corollary 4.2]{31}, $A$ has a unique Fr\'{e}chet topology,
and so the topology given by the sequence $(\|\cdot\|_n)$ is
equivalent to the usual topology $\tau_c$, given by the sequence
$(p_m^{'})$ of proper seminorms, on $A$. In particular, there
exist $m \in \nn$ and $C > 0$ such that $\|f\|_1 \leq Cp_m{'}(f)$
for all $f \in \F_\infty$. But now $$\|X^{m + 1}\|_1 \leq
Cp_m{'}(X^{m + 1}) = 0,$$ a contradiction of the fact that
$\|\cdot\|_1$ is a norm on $\F_\infty$. Thus there is no such
topological embedding.

Next, assume towards a contradiction that $\theta : \F_\infty
\rightarrow \F$ is such an algebraic embedding. Then $\theta$ is
not a surjection, for this would imply that $\F_\infty \cong \F$
algebraically, and this is impossible because $\F$ is noetherian,
but $\F_\infty$ is not. Hence $\theta$ is a proper embedding. By
~\cite[Theorem 11.2]{15}, $\theta$ is continuous, and so we may
regard $A := \theta(\F_\infty)$ as a Fr\'{e}chet subalgebra of
$\F$. Now, following the arguments given in the above two
paragraphs, we lead to a contradiction. Thus there is no such
algebraic embedding. $\hfill \Box$

We shall return to an interesting fact about this theorem in view
of ~\cite[Theorems 9.1 and 10.1]{15} at the end of the paper.\\

\noindent {\bf 1.4. Main theorem, with comments on the
strategies.} In this paper, we shall be concerned with the
affirmative solutions to Michael's two problems. We shall briefly
discuss some general theory of topological tensor (symmetric)
algebras over a Banach space in \S 2, in order to establish
notation that will be used throughout the paper. In particular, we
mainly study certain semigroup (Banach/Fr\'{e}chet) algebras over
the semigroup $S = (\zz^+)^{<\omega}$, which are graded
subalgebras of $\F_\infty$ ~\cite[\S 9, \S 10]{15}. These algebras
are required for our approach to solve Michael's problem in \S 3.
Thus these algebras over the Banach space $\ell^1$ are the
technical main-spring of the paper.

In our approach, the essential ingredient is as follows. First, we
describe the test algebra $\cal U$ in terms of weighted
Fr\'{e}chet symmetric algebra $\widehat{\bigvee}_WE$ over the
Banach space $E\,=\,\ell^1$ (this example, discussed in
~\cite{34}, motivated the ideas of the present paper). Then we
show that the existence of a discontinuous character $\phi$ on
$\cal U$ leads us to produce a totally discontinuous higher point
derivation $(d_n)_{n\,\in\,\zz^+}$ on $\cal U$ at $\phi$. By
~\cite[Theorems 11.2 and 10.1 (ii)]{15}, this would imply that
${\cal U}\,=\,\F$, a contradiction. (Discussions along these lines
always skirt Dales-McClure's problem, solved affirmatively by
Dales, author and Read in 2010.) We give some crucial remarks on
our approach, along with an alternative proof of the following\\

\noindent {\bf Main Theorem.} {\it All characters on the
commutative Fr\'{e}chet algebra $({\cal U}, (q_m))$ are
continuous. In particular, these characters are, indeed, the point
evaluation mappings, that is, if $\phi$ is a character on $\cal
U$, then there exists $z\,\in\,\ell^\infty$ such that
$\phi\,=\phi_z$, where $\phi_z(f)\,=\,f(z)$ for all $f\,\in\,\cal
U$.}

As consequences, we have affirmative solutions to Michael's two
problems as follows (see Corollary 3.2 and Corollary 3.3 below).

\noindent {\bf Statements.} {\it 1. Every commutative Fr\'{e}chet
algebra is functionally continuous. 2. Every commutative, complete
LMC-algebra is functionally bounded.}

We extend certain results about functional continuity of Stein
algebras, established by Forster ~\cite{22} and Ephraim
~\cite{19}. We also discuss implications of our solution in view
of ~\cite[\S 2, \S 6]{20} and ~\cite{9}, and give further
interesting remarks in automatic continuity theory.

In \S 3 and \S 4, we summarize the previous developments on
Michael's two problems in view of our affirmative solutions. For
example, we shall discuss the Carpenter's result ~\cite[Corollary
6]{8}. It is a known fact that another equivalent form of
Michael's problem is the long-standing problem of continuity of a
homomorphism from a Fr\'{e}chet algebra $B$ into a semi-simple
Fr\'{e}chet algebra $A$ ~\cite[p. 143]{11}. As we shall see, an
affirmative answer to this problem would give us the shortest
proof of the uniqueness of the Fr\'{e}chet topology for
commutative, semi-simple Fr\'{e}chet algebras. Further, we {\it
completely} settle Forster's problem (1966) in the affirmative
(see ~\cite[REMARK]{22} and Corollary 4.5 below). We also extend
certain automatic (dis)continuity results of ~\cite[\S 12]{15},
{\it completing} the circle of ideas.
\section{Topological tensor (symmetric) algebras over a Banach space} In the
next sections, our important tool is a topological version of the
tensor (symmetric) algebra over a Banach space appearing in
~\cite{26}. However, a fuller description of what we require is
given in ~\cite{14, 12}. We briefly recall some notation, which
will be used throughout the paper. Let $E = \hat{\bigotimes}^1E$
be a Banach space, and take $\hat{\bigotimes}^0E = \cc$. For each
$p \geq 2$, write $\hat{\bigotimes}^pE$ (resp.,
$\check{\bigotimes}^pE$) for the completion of $\bigotimes^pE$
with respect to (briefly: w.r.t.) the projective tensor product
norm $\|\cdot\|_\pi$ (resp., the equicontinuous tensor product
norm $\|\cdot\|_\epsilon$). When it is unnecessary to distinguish,
we may write $\hat{\bigotimes}^pE$ for either completion, and
$\|\cdot\|$ for the specified norm. Then $\widehat{\bigotimes}E$
is a non-commutative, unital Fr\'{e}chet tensor algebra over $E$
(if $\textrm{dim}\,E > 1$) w.r.t. the product
\begin{equation} \label{tensor}
(\sum_pu_p)\,\otimes\,(\sum_pv_p)\,=\,\sum_p(\sum_{i+j=p}u_i\,\otimes\,v_j)
\end{equation}
and the coefficientwise convergence topology defined by an
increasing sequence $(\|\cdot\|_m)$ of seminorms, where
$\|\sum_p\,u_p\|_m\,=\,\sum_{p=0}^m\|u_p\|$. We refer to the
subspaces $\hat{\bigotimes}^pE$ as the homogeneous subspaces of
$\widehat{\bigotimes}E$.

We shall also need in a future proof a commutative analogue
$\widehat{\bigvee}E$ of $\widehat{\bigotimes}E$, and a closed,
linear subspace $\hat{\bigvee}^pE$ of $\hat{\bigotimes}^pE$. Then
$\widehat{\bigvee}E$ is a commutative, unital Fr\'{e}chet
symmetric algebra over $E$ w.r.t. the product
\begin{equation} \label{symmetric}(\sum_pu_p)\,\vee\,(\sum_pv_p)\,=\,\sum_p(\sum_{i+j=p}u_i\,\vee\,v_j)
\end{equation}
and the same coefficientwise convergence topology defined by
$(\|\cdot\|_m)$. We remark that if $\tilde{\sigma} \,=\,\bigoplus
\tilde{\sigma}_{p}$ is the continuous, symmetrizing epimorphism
$\tilde{\sigma} : \widehat{\bigotimes}E \rightarrow
\widehat{\bigvee}E$, then $\textrm{ker}\,\tilde{\sigma}$ is the
closed, two-sided ideal of $\widehat{\bigotimes}E$ generated by
$$\{u\otimes v - v\otimes u : u,\, v \in \hat{\bigotimes}^1E =
E\},$$ so that the algebras in ~\cite{26} and the algebras
$\widehat{\bigvee}E$ are topologically isomorphic ~\cite[p.
315]{14}.

There are some important Banach subalgebras of the above examples
~\cite[Example 2.2.46 (ii)]{12}. Let $\omega$ be a weight on
$\zz^+$ and let $A$ be one of the algebras $\widehat{\bigotimes}E$
and $\widehat{\bigvee}E$. (When it can cause no confusion, the
symbol $(\zz^+)^{<\omega}$ is used to denote the subsemigroup
consisting of all $\zz^+-$valued sequences that are eventually
$0$.) Define $\widehat{\bigotimes}_{\omega}E$ and
$\widehat{\bigvee}_{\omega}E$, respectively, as
\begin{equation} \label{Banach}\{u\,=\,\sum_{p=0}^\infty\,u_p\,\in\,A\,:\,\|u\|_\omega\,:=\,\sum_{p=0}^\infty\|u_p\|\omega(p)\,<\,\infty\}.
\end{equation}
We obtain two unital Banach subalgebras, with
$\widehat{\bigvee}_{\omega}E$ being commutative. These algebras
are called weighted Banach tensor algebras and weighted Banach
symmetric algebras, respectively. The algebra
$\widehat{\bigotimes}_{\omega}E$, where $\omega \equiv 1$ on
$\zz^+$, is the Banach tensor algebra which is the starting point
for the constructions in ~\cite{26}. We recall the following
theorem from ~\cite{14} (also, cf. ~\cite[Satz 2 and Satz 3]{26}),
describing the maximal ideal space and semi-simplicity of
$\widehat{\bigvee}_{\omega}E$.
\begin{thm} \label{Thm 1.2_DM} \begin{enumerate}
\item[{\rm (i)}] The space of characters on
$\widehat{\bigvee}_{\omega}E$ is homeomorphic with the ball
$\{\lambda\,\in\,E{'}\,:\,\|\lambda\|\,\leq\,\inf_{p}\omega(p)^\frac{1}{p}\,\equiv\,\omega_\infty\}\;\;(w^*-\textrm{topology}).$
\item[{\rm (ii)}] $\widehat{\bigvee}_{\omega}E$ (w.r.t. the
equicontinuous tensor norm $\|\cdot\|_\epsilon$) is semi-simple if
and only if $\omega_\infty\,>\,0$. \item[{\rm (iii)}]
$\widehat{\bigvee}_{\omega}E$ (w.r.t. the projective tensor norm
$\|\cdot\|_\pi$) is semi-simple if and only if
$\omega_\infty\,>\,0$ and $E$ has the approximation property.
\end{enumerate}
\end{thm}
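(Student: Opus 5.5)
For (i), I would start from the fact that $\widehat{\bigvee}^1E=E$ together with the unit generates a dense subalgebra of $\widehat{\bigvee}_{\omega}E$. The finite sums $\sum_{p\le N}u_p$, with each $u_p$ a finite sum of symmetric products $x_1\vee\cdots\vee x_p$, are dense, because the norm $\|\cdot\|_\omega$ is an $\ell^1$-sum over homogeneous components and elementary tensors span a dense subspace of each $\hat{\bigvee}^pE$. Hence a character $\phi$ is determined by $\lambda:=\phi|_E$. The first step is to show that $\lambda\in E'$ with $\|\lambda\|\le\omega_\infty$. Since $\phi$ is automatically continuous of norm at most $1$ on a unital Banach algebra, we have $|\lambda(x)|^p=|\phi(x^{\vee p})|\le\|x^{\vee p}\|\,\omega(p)\le\|x\|^p\omega(p)$, because both $\pi$ and $\epsilon$ are cross norms. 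Taking $p$-th roots and the infimum over $p$ gives the bound.

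For the converse, given $\lambda$ with $\|\lambda\|\le\omega_\infty$, I would define $\phi_\lambda(\sum u_p)=\sum_p\lambda^{\otimes p}(u_p)$. Here $\lambda^{\otimes p}$ extends to $\hat{\bigotimes}^pE$ with norm $\|\lambda\|^p$ for both the $\pi$ and $\epsilon$ completions, since $\epsilon$ is the smallest reasonable crossnorm. The bound $\|\lambda\|^p\le\omega_\infty^p\le\omega(p)$ makes $\phi_\lambda$ continuous on $\widehat{\bigvee}_\omega E$. Multiplicativity would be checked on elementary symmetric tensors, where it reduces to $\lambda^{\otimes(i+j)}(u\vee v)=\lambda^{\otimes i}(u)\lambda^{\otimes j}(v)$, and then extended by continuity. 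This gives a bijection between characters and the ball. For the homeomorphism, I would compare the Gelfand topology with the $w^*$-topology. Convergence $\lambda_\alpha\to\lambda$ in $w^*$ gives pointwise convergence on elementary tensors; since the ball is bounded, this passes to the whole algebra by density. The inverse direction is continuous because evaluation at $x\in E$ is a Gelfand coordinate. Both spaces are compact Hausdorff, so the continuous bijection is a homeomorphism.

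For (ii) and (iii), I would identify the Gelfand transform of the homogeneous component $u_p$ with the polynomial $\lambda\mapsto\lambda^{\otimes p}(u_p)$ on the ball of radius $\omega_\infty$. If $\omega_\infty=0$, the character space is the single point $\lambda=0$, so all of $\bigoplus_{p\ge1}\hat{\bigvee}^pE$ lies in the radical and the algebra is not semisimple. If $\omega_\infty>0$ and $\hat u=0$ on the ball, then scaling $\lambda\mapsto t\lambda$ separates homogeneous degrees, since $\hat u(t\lambda)=\sum_p t^p\lambda^{\otimes p}(u_p)$ is a power series in $t$ vanishing on a disc. So each $\lambda^{\otimes p}(u_p)=0$ for all $\lambda$, by homogeneity for all $\lambda\in E'$. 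The question becomes whether a symmetric tensor killed by all $\lambda^{\otimes p}$ must vanish. By polarization, $\lambda^{\otimes p}$ spans the same weak$^*$ space as the symmetric products $\lambda_1\otimes\cdots\otimes\lambda_p$ on symmetric tensors. For the injective completion these functionals separate points, since $\check{\bigotimes}^pE$ embeds isometrically into bounded multilinear forms on $(E')^p$. This proves (ii).

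The main obstacle is (iii). Injectivity of $\hat{\bigotimes}^pE\to\check{\bigotimes}^pE$ for the projective tensor product is exactly Grothendieck's characterization of the approximation property, and it must hold for the symmetric part as well. I would use Grothendieck's result for $p=2$: AP for $E$ is equivalent to injectivity of $E\hat\otimes F\to E\check\otimes F$ for all $F$, and in particular for $F=E$. For the symmetric restriction, the necessity direction needs care. A non-injectivity witness $w\in E\hat\otimes E$ need not be symmetric, but it can be symmetrized, or replaced by $w\vee w'$-type constructions, or handled by passing to $E\oplus E$ and using that AP passes to complemented subspaces. I expect this symmetric-versus-full tensor reduction to be the delicate point, and I would rely on the argument in \cite{14} for it.
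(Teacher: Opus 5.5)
The paper gives no proof of this theorem. It quotes it from Dales--McClure \cite{14} (cf.\ \cite{26}), so your proposal has to stand on its own.

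\textbf{What is correct.} Your arguments for (i) and (ii) are sound, as are two of the three implications in (iii): that $\omega_\infty>0$ is necessary, and that $\omega_\infty>0$ together with the AP is sufficient. In particular, the following steps all work:
\begin{enumerate}
\item the bound $|\lambda(x)|^p\le\|x\|^p\omega(p)$;
\item the extension of $\lambda^{\otimes p}$ with norm $\|\lambda\|^p$ for both reasonable crossnorms;
\item density of finite sums of elementary symmetric tensors;
\item the compact-to-Hausdorff homeomorphism argument;
\item the scaling $t\mapsto\hat u(t\lambda)$ to separate degrees;
\item polarization;
\item the isometric embedding of $\check{\bigotimes}^pE$ into multilinear forms on $(E')^p$.
\end{enumerate}

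\textbf{A detail in the sufficiency direction.} For $p\ge3$, Grothendieck's theorem is not literally the statement you need. Write $\hat{\bigotimes}^p_\pi E=E\hat\otimes_\pi(\hat{\bigotimes}^{p-1}_\pi E)$. The AP of $E$ gives injectivity into $E\check\otimes(\hat{\bigotimes}^{p-1}_\pi E)$. Then use that $\mathrm{id}_E\check\otimes j$ is injective whenever $j$ is, by identifying $E\check\otimes G$ with operators $E'\to G$. Induction on $p$ finishes the argument.

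\textbf{The gap.} The remaining implication is not proved: that semi-simplicity of the $\pi$-algebra forces the AP. By your own argument for (ii), when $\omega_\infty>0$ semi-simplicity is equivalent to injectivity of the canonical map $J_p:\hat{\bigvee}^p_\pi E\to\check{\bigotimes}^pE$ for every $p$. So from a failure of the AP you must produce a nonzero \emph{symmetric} tensor, in some degree $p$, annihilated by every $\lambda^{\otimes p}$. None of your suggested routes does this.
\begin{enumerate}
\item Grothendieck's theorem says the AP is equivalent to injectivity for \emph{all} $F$. So ``in particular for $F=E$'' is only the direction you already used. A failure of the AP is witnessed in $E\hat\otimes_\pi F$ for \emph{some} $F$. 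The standard witness lies in $E'\hat\otimes_\pi E$: it is the nuclear representation $\sum x_n'\otimes x_n$ of a functional on $L(E)$, continuous for uniform convergence on compact sets, that kills the finite-rank operators but not $I$. Nothing in your argument moves this witness into $E\hat\otimes_\pi E$.
\item Even granting a nonzero $w\in\ker J_2\subset E\hat\otimes_\pi E$, symmetrization gives $0$ when $w$ is antisymmetric. Products such as $w\vee w'$ do not help, because $\tilde\sigma_p$ annihilates every tensor that is antisymmetric in some pair of slots; this is just the commutativity of the algebra.
\item Passing to $E\oplus E$ changes the algebra. A nonzero radical element of $\widehat{\bigvee}_\omega(E\oplus E)$ says nothing about $\widehat{\bigvee}_\omega E$. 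The fact that the AP passes to complemented subspaces does not bridge this.
\end{enumerate}
So the ``only if'' half of (iii) remains unproved in your proposal. At that point you are, like the paper, simply relying on \cite{14}.
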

Along the same lines, for an increasing sequence
$W\,=\,(\omega_m)$ of weights on $\zz^+$, we define
$\widehat{\bigotimes}_{W}E$ and $\widehat{\bigvee}_{W}E$,
respectively, as
\begin{equation} \label{Frechet} \{u\,=\,\sum_{p=0}^\infty\,u_p\,\in\,A\,:\,p_m(u)\,:=\,\sum_{p=0}^\infty\|u_p\|\omega_m(p)\,<\,\infty\;\textrm{for
all}\, m\,\in\,\nn\}. \end{equation} We obtain two unital
Fr\'{e}chet subalgebras, with $\widehat{\bigvee}_{W}E$ being
commutative. These algebras are called weighted Fr\'{e}chet tensor
algebras and weighted Fr\'{e}chet symmetric algebras,
respectively. Clearly, from \eqref{Frechet}, their Arens-Michael
representations are given by $\widehat{\bigvee}_{W}E = \bigcap_{m
\in \nn}(\widehat{\bigvee}_{\omega_m}E,\, p_m)$ and
$\widehat{\bigotimes}_{W}E$
$=\,\bigcap_{m\,\in\,\nn}(\widehat{\bigotimes}_{\omega_m}E,\,
p_m)$, where $(p_m)$ is a sequence of norms on the respective
Fr\'{e}chet algebra.

Set $E=\ell^1(\zz^+)$, the Banach space for the remaining part of
this section. Then, as in ~\cite[\S 10]{15}, for each $p \in \nn$,
$\hat{\bigotimes}^pE$ can be identified with $\ell^1((\zz^+)^p)$
as a Banach space. This Banach space can also be viewed as the
space of absolutely summable functions on $(\zz^+)^p$
~\cite[Chapter 45]{41} (equivalently, the space $A^+(D^p)$ of
functions in $A(D^p)$ ($D^p$ the product of $p$-copies of $D$ in
$\cc$), with absolutely convergent Taylor series on $\Gamma^p$
(the product of $p$-copies of the unit circle $\Gamma$ in $\cc$),
w.r.t. the $\|\cdot\|_{\ell^1}$-norm on the space).
\begin{thm} \label{p 315_DM2} For each $p \in \nn$, the Banach space $\hat{\bigotimes}^pE$ is a Banach algebra, and the character
space of $\hat{\bigotimes}^pE$ is homeomorphic with the polydisc
$D^p$.
\end{thm}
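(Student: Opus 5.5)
The plan is to identify $\hat{\bigotimes}^pE$, with $E=\ell^1(\zz^+)$, with the semigroup algebra $\ell^1((\zz^+)^p)$ under pointwise convolution, and then read off its character space as for the disc algebra. Under the identification recalled from \cite[\S 10]{15}, an elementary tensor $\delta_{j_1}\otimes\cdots\otimes\delta_{j_p}$ corresponds to the point mass $\delta_{(j_1,\dots,j_p)}$ on $(\zz^+)^p$. The projective norm of $\bigotimes^p\ell^1$ coincides with the $\ell^1$-norm on $(\zz^+)^p$, so the completion is exactly $\ell^1((\zz^+)^p)$.

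For the algebra structure, define the product on $\hat{\bigotimes}^pE$ by requiring
$$(\delta_{(j_1,\dots,j_p)})\ast(\delta_{(k_1,\dots,k_p)})=\delta_{(j_1+k_1,\dots,j_p+k_p)}.$$
This is coordinatewise convolution: the tensor product $\ell^1(\zz^+)\hat{\otimes}\cdots\hat{\otimes}\ell^1(\zz^+)$ of $p$ copies of the Banach algebra $\ell^1(\zz^+)$, which is again a Banach algebra under the projective norm. This product is not the grading-shifting product \eqref{tensor}; it is an internal product on the degree-$p$ piece. Commutativity, associativity and the identity $\delta_{(0,\dots,0)}$ are immediate. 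Submultiplicativity $\|f\ast g\|_{\ell^1}\le\|f\|_{\ell^1}\|g\|_{\ell^1}$ follows from the triangle inequality, since $(\zz^+)^p$ is a semigroup under addition. Equivalently, the algebra is the weight-one semigroup algebra $\ell^1(S)$ with $S=(\zz^+)^p$, or $A^+(D^p)$ via $f\mapsto\sum_J f(J)z^J$.

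For the character space, define for each $z=(z_1,\dots,z_p)\in\overline{D}^p$ the map
$$\phi_z(f)=\sum_{J}f(J)z^J .$$
The series converges absolutely since $|z^J|\le1$, and $\phi_z$ is multiplicative on point masses, hence everywhere by continuity. Conversely, a character $\phi$ is determined by $z_i:=\phi(\delta_{e_i})$, where $e_i$ is the $i$th unit vector. Since $\|\delta_{e_i}^n\|=1$, we get $|z_i|^n\le1$ for all $n$, so $|z_i|\le1$. The point masses span a dense subspace, so $\phi=\phi_z$. The map $z\mapsto\phi_z$ is therefore a continuous bijection from the compact set $\overline{D}^p$ onto the Gelfand space with the weak$^*$ topology. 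Continuity holds because $\phi_z(f)$ depends continuously on $z$ for each $f$, by uniform convergence. A continuous bijection from a compact space to a Hausdorff space is a homeomorphism. Here ``polydisc $D^p$'' must be read as the closed polydisc, matching the usage of $A(D^p)$.

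The main obstacle is not the Gelfand theory but the first step: making precise which multiplication is meant and checking that it is compatible with the identification $\hat{\bigotimes}^pE\cong\ell^1((\zz^+)^p)$. This requires the projective norm on $\bigotimes^p\ell^1$ to be exactly the $\ell^1$-norm, the standard Grothendieck fact $\ell^1(X)\hat\otimes\ell^1(Y)=\ell^1(X\times Y)$ applied inductively. Once that is established, the result reduces to the classical description of the maximal ideal space of $\ell^1((\zz^+)^p)$, cf.\ \cite[Chapter 45]{41}.
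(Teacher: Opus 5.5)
Your proposal is correct and follows essentially the paper's route. The paper also makes $\hat{\bigotimes}^pE$ a Banach algebra via the product $(f_1\otimes\cdots\otimes f_p)\cdot(g_1\otimes\cdots\otimes g_p)=f_1g_1\otimes\cdots\otimes f_pg_p$, which is your coordinatewise convolution; it then extends this product to the projective completion and identifies the result with $\ell^1((\zz^+)^p)$. The only difference is that you actually carry out the Gelfand-space computation, which the paper simply asserts after the identification: you use automatic continuity of characters on a Banach algebra, implicitly, and you correctly read $D^p$ as the closed polydisc.
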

\noindent{\it Proof.} By ~\cite[Proposition 1.3.11]{12},
$\bigotimes^pE$ is a commutative, unital algebra w.r.t. the
product
\begin{equation}
\label{product-D}(f_1 \otimes \dots \otimes f_p)\cdot(g_1
 \otimes \dots \otimes g_p) = (f_1g_1 \otimes
\dots \otimes f_pg_p)\;(f_i, g_i \in E,\;i \in \nn_p).
\end{equation}
Then, it is easy to see that it is a normed algebra w.r.t. the
projective tensor norm $\|\cdot\|_\pi$ (note that $\|\cdot\|_\pi$
is submultiplicative on $\bigotimes^pE$). Indeed,
$\hat{\bigotimes}^pE$ is a Banach algebra by noticing that the
product can be extended to $\hat{\bigotimes}^pE$ (cf. ~\cite[p.
315]{14}). Thus, for each $p \in \nn$, $\hat{\bigotimes}^pE$ can
be identified with $\ell^1((\zz^+)^p)$ as a Banach algebra via the
homomorphism $$\theta\,:\,(f_1 \otimes \dots \otimes
f_p)(r)\,\mapsto\,f_1(r_1)\cdots f_p(r_p)\; \textrm{where}\;
r\,=\,(r_1,\,\dots,\,r_p)\,\in\,(\zz^+)^p.$$ So, the character
space of $\hat{\bigotimes}^pE$ is homeomorphic with the polydisc
$D^p$ for each $p \in \nn$. $\hfill \Box$

The main point of this identification should be emphasized. The
completion of the tensor power of finite copies of the Banach
space (which is also a singly generated Banach algebra) w.r.t.
$\|\cdot\|_\pi$ is identified with a finitely generated Banach
algebra. Unfortunately, we cannot extend this to $\nn$-fold tensor
product within the framework of Banach algebras. However, we can,
indeed, extend this to $\nn$-fold tensor product within the
framework of Fr\'{e}chet algebras (e.g., the algebra
$\widehat{\bigotimes}E$ and its commutative analogue
$\widehat{\bigvee}E$ below).

As above, $\hat{\bigvee}^pE$ is a closed, linear subspace of
$\hat{\bigotimes}^pE$, consisting of the symmetric elements.
Recall that $\hat{\bigvee}^pE$ is the range of the symmetrizing
map $\tilde{\sigma}_{p}$ (equivalently, the projection of norm
$1$) on $\hat{\bigotimes}^pE$. In fact, every element of
$\hat{\bigotimes}^pE$ is symmetric due to the fact that
$\ell^1((\zz^+)^p)$ is identified with $A^+(D^p)$ and so,
$\partial_kf(z)\;(z\,\in\,D^p)$ will be invariant under
permutations of $1,\,2,\,\dots,\,k$ (cf. ~\cite[p. 320]{14}). So,
$\hat{\bigvee}^pE\,=\,\hat{\bigotimes}^pE$ by ~\cite[Chapter
45]{41} and ~\cite[p. 321]{14}. Thus, as above, for each $p \in
\nn$, $\hat{\bigvee}^pE\,\cong\,\ell^1((\zz^+)^p)$ as a Banach
algebra.

As above, the algebra $\widehat{\bigotimes}E$ is naturally
identified with a graded subalgebra of $\cal B$, and
$\widehat{\bigvee}E$ is naturally identified with a graded
subalgebra of $({\cal B}_{\textrm{sym}}, \vee)$ ~\cite[\S 10]{15},
called the unital Fr\'{e}chet symmetric algebra over $E$. The
latter algebra can also be viewed as the algebra of $p$-absolutely
summable symmetric functions on $(\zz^+)^{<\omega} =
\bigcup_{p\in\nn}(\zz^+)^p$ for all $p \in \nn$. That is, if $f =
\sum_pf_p$ is a symmetric function on $(\zz^+)^{<\omega}$, then
$f_p$ is an absolutely summable function on $(\zz^+)^p$ for each
$p \in \nn$. The former algebra $\widehat{\bigotimes}E$ can also
be viewed as the algebra of $p$-absolutely summable functions on
$(\zz^+)^{<\omega}$.\\

\noindent {\it Remarks A.} 1. We shall often write the Banach
algebra $\widehat{\bigvee}_{\omega}E$, where $\omega \equiv 1$ on
$\zz^+$, as ${\cal U}_{1} = \ell^1((\zz^+)^{<\omega})$ throughout
the paper (see \S 3 below), since it is isometrically isomorphic
with $\ell^1((\zz^+)^{<\omega})$ ~\cite[p. 142]{15}. So, by
Theorem 2.1, it may be viewed as the algebra of functions in
$A(\ell_{[1]}^\infty)$ with absolutely convergent Taylor series
(equivalently, absolutely summable symmetric functions on
$(\zz^+)^{<\omega}$), where $\ell_{[1]}^\infty$ is the unit
polydisc in $\cc^\infty$; compare ~\cite{9} and the above
situation in the finite-dimensional case.

\noindent 2. Recall that Dales, author and Read gave a very long
proof of showing $\ell^1((\zz^+)^{<\omega})$ a Banach algebra of
power series (briefly: BAPS) ~\cite[Theorem 10.1 (i)]{15}. We
drastically reduce this proof in the following
\begin{lem} \label{Thm 10.1 (i)_DPR} There is a topological isomorphism
$\theta$ of $\ell^1((\zz^+)^{<\omega})$ onto $\ell^1(\zz^+)$, and
so the Banach algebra $\ell^1((\zz^+)^{<\omega})$ is
(isometrically isomorphic to) a semi-simple Banach algebra of
power series $\ell^1(\zz^+)$.
\end{lem}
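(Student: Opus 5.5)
The plan is to construct $\theta$ at the level of the index semigroups, then check isometry, multiplicativity and semi-simplicity in turn. Write $S=(\zz^+)^{<\omega}$ for the semigroup of eventually-zero $\zz^+$-valued sequences, so that ${\cal U}_1=\ell^1(S)$ consists of the series $\sum_{J\in S}\lambda_JX^J$ with $\sum_J|\lambda_J|<\infty$, with product given by convolution over $S$.

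First I will fix an explicit bijection $\beta:S\to\zz^+$. The natural choice is through prime factorisation: if $(q_i)$ enumerates the primes, put
$$\beta(J)=\prod_i q_i^{J_i}-1 ,$$
which is a bijection by uniqueness of factorisation. Then define
$$\theta\Big(\sum_J\lambda_JX^J\Big)=\sum_J\lambda_J\,\delta_{\beta(J)}.$$
Since $\beta$ is a bijection, $\theta$ is a linear isometry of $\ell^1(S)$ onto $\ell^1(\zz^+)$, because it only permutes the coordinates. This already gives the topological (indeed isometric) isomorphism of Banach spaces.

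Second, I must show that $\theta$ is multiplicative, i.e. $\theta(X^{J+K})=\theta(X^J)\theta(X^K)$. For the convolution product of $\ell^1(\zz^+)$ this requires $\beta(J+K)=\beta(J)+\beta(K)$. The prime-factor bijection is not additive: it converts addition in $S$ into multiplication of positive integers.

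\textbf{This is the main obstacle.} The semigroup $S$ has infinite rank, while $\zz^+$ has rank one, so no injective semigroup homomorphism $S\to\zz^+$ exists. The obstruction is also visible on the Gelfand side. By Theorem 2.1(i), with $\omega\equiv1$ and $E=\ell^1$, the character space of ${\cal U}_1$ is the closed unit ball of $\ell^\infty$. The character space of $\ell^1(\zz^+)$ is the closed disc. An algebra isomorphism would make these homeomorphic, which they are not.

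My first attempt would therefore be to give $\ell^1(\zz^+)$ the product transported from $\ell^1(S)$ by $\theta$, namely $\delta_m\ast'\delta_n=\delta_{(m+1)(n+1)-1}$. This is an $\ell^1$-Dirichlet-type multiplicative convolution rather than Cauchy convolution. With this product $\theta$ is an isometric isomorphism by construction. The "power series" structure would then be read off by transporting the BAPS structure of ${\cal U}_1$ established in \cite[Theorem 10.1 (i)]{15}.

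The last step is semi-simplicity. I would obtain it from Theorem 2.1(iii), since $\omega_\infty=1>0$ and $\ell^1$ has the approximation property. Semi-simplicity is preserved under the algebra isomorphism $\theta$.

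I expect that only this transported version survives. The claim that the isomorphism is onto $\ell^1(\zz^+)$ with its usual convolution would contradict the Gelfand-space computation above. So the hard step is really to decide, and then justify, which algebra structure on $\ell^1(\zz^+)$ the statement intends.
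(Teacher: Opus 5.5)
Your obstruction is correct, and it defeats the statement itself rather than your approach. The product intended on $\ell^1(\zz^+)$ is the usual Cauchy convolution. A Banach algebra of power series is by definition a subalgebra of $\F$, and the paper's proof works with a homomorphism into $\F$. With that product the lemma is false, and your Gelfand-space remark is already a complete disproof. You should state it as one rather than hedging about ``which structure is intended''.

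It also locates exactly where the paper's proof fails. The paper takes the homomorphism from the proof of [14, Theorem 1.2 (iii)], from $\widehat{\bigvee}_{\omega}E$ ($\omega\equiv 1$, $E=\ell^1$) into $\F$. It asserts that this map is an injective epimorphism onto $\ell^1(\zz^+)$, and then continuity (via [24, \S 9]) and the open mapping theorem finish the argument. The last two steps are harmless, but the first cannot hold for any map. Let $\theta$ be an algebra isomorphism between unital commutative Banach algebras. Then $\phi\mapsto\phi\circ\theta$ is a bijection of character spaces that is continuous for the Gelfand topologies, with no continuity of $\theta$ needed. It is therefore a homeomorphism of compact Hausdorff spaces. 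The character space of $\ell^1((\zz^+)^{<\omega})$ is $\bar D^{\nn}$, by Theorem 2.1 (i) or directly via bounded semicharacters. It contains the $4$-cell $\bar D\times\bar D\times\{0\}\times\cdots$, which cannot embed in $\bar D\subset\cc$ by invariance of domain.

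What survives is weaker, and your proposal essentially finds it.
\begin{enumerate}
\item[(a)] Read for Banach spaces, the first clause is true, and your permutation argument proves it; any bijection $S\to\zz^+$ would do.
\item[(b)] The transported product $\delta_m\ast'\delta_n=\delta_{(m+1)(n+1)-1}$ makes $\ell^1(\zz^+)$ into $\ell^1$ of the multiplicative semigroup $(\nn,\cdot)$, i.e.\ the algebra of absolutely convergent Dirichlet series. Prime factorisation is indeed an isometric algebra isomorphism onto it. But that algebra is not a Banach algebra of power series, since it does not sit inside $\F$ with the Cauchy product.
\item[(c)] A genuine power-series structure has to be imported from [15, Theorem 10.1 (i)]. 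By the argument above, the resulting subalgebra of $\F$ is not $\ell^1(\zz^+)$.
\end{enumerate}
So the lemma reduces to a restatement of that theorem, not the shortcut the paper claims. Likewise, the consequence drawn at the end of the paper's proof has content only at the level of Banach spaces. That consequence is that $q_1$ is equivalent to the norm of the semi-simple BAPS $\ell^1(\zz^+)$, and the paper later invokes it in the proof of its main theorem. Your semi-simplicity step is fine. It follows from Theorem 2.1 (iii), or directly because the Gelfand transform $\sum_J\lambda_JX^J\mapsto\bigl(z\mapsto\sum_J\lambda_Jz^J\bigr)$ on $\bar D^{\nn}$ is injective.
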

\noindent {\it Proof.} First, we note that the homomorphism,
discussed in the proof of ~\cite[Theorem 1.2 (iii)]{14}, from
$\widehat{\bigvee}_{\omega}E$, where $\omega \equiv 1$ on $\zz^+$,
into $(\F, \,\omega)$ (which is, in fact, $\ell^1(\zz^+)$), is,
indeed, an injective epimorphism. Moreover, by ~\cite[\S 9]{24},
it is continuous since $\ell^1(\zz^+)$ is a BAPS in $\F$. So, by
the open mapping theorem, this isomorphism is a topological
isomorphism between ${\cal U}_{1} = \ell^1((\zz^+)^{<\omega})$ and
$\ell^1(\zz^+)$. Thus the norm $q_1$ of ${\cal U}_{1}$ is
equivalent to $\|\cdot\|_{\ell^1}$ on a semi-simple BAPS
$\ell^1(\zz^+)$ (we shall use this fact in the proof of the main
theorem below). $\hfill \Box$

However, the difference between the above proof and the proof of
~\cite[Theorem 10.1 (i)]{15} is as follows. In the above proof,
the bounded linear functional $\lambda_k$ on
$\textrm{SE}_\pi(k),\;k\,\in\,\nn$, need not be higher point
derivations of infinite order on $\ell^1(\zz^+)$ (cf. ~\cite[\S 1,
p. 169]{13}, introduction in ~\cite[\S 2]{14} and ~\cite[Theorem
10.1]{15}). We shall return to a reduction of the proof of
~\cite[Theorem 10.1 (ii)]{15} (the Fr\'{e}chet case) in Corollary
4.9 below.

\noindent 3. Richard Aron and his team have worked a lot on the
algebras of analytic functions on an infinite-dimensional Banach
space ~\cite{6, 7}. We, here, represent a ``tensor approach" for
the study of such algebras.
\section{Our approach}
\noindent {\bf 3.1. Background.} Our approach is based on
generating a totally discontinuous higher point derivation
$(d_n)_{n \in \zz^+}$ of infinite order on the test algebra $\cal
U$ at a discontinuous character $\phi$. We then apply the
Dales-author-Read's method from ~\cite{15} to arrive at a
contradiction. For general information about higher point
derivation $(d_n)$ of infinite order on a topological algebra $A$
at a character, we refer to ~\cite{13, 11}. We call $(d_n)$
``natural" on a topological algebra $A$ at a character $d_0$
(continuous or not), if $d_1$ is induced by $d_0$. In other words,
$d_1$ is continuous (resp., discontinuous), if so is $d_0$. Thus
this notion extends the notion of totally discontinuous $(d_n)$,
stated in ~\cite{14, 15}. In particular, for us, if $(d_n)$ is
natural and totally discontinuous, then all $d_n (n \in \zz^+)$
are discontinuous. Of course, if $(d_n)$ is natural and
continuous, then all $d_n (n \in \zz^+)$ are continuous ~\cite[p.
129]{15}. For example of natural $(d_n)$, we refer to
~\cite[Example 6.4]{11}. In the Fr\'{e}chet case, one can take
$d_n(f) = \frac{f^{(n)}(z)}{n!}$, where $f \in \textrm{Hol}(U),\;
U$ the open unit disc in $\cc$, and $d_n(f) =
\frac{f^{(n)}(x)}{n!}$, where $f \in C^\infty([0, 1])$ ~\cite[pp.
177, 188]{13}. For examples of non-natural $(d_n)$, we refer to
~\cite[\S 2]{14}. In the Fr\'{e}chet case, one can take $d_n =
\lambda_n \circ P_n$ on the weighted Fr\'{e}chet symmetric algebra
$\widehat{\bigvee}_{W}E$ (discussed in \S 2; also, called
Dales-McClure-Fr\'{e}chet algebra in ~\cite{34}) at a continuous
character $d_0 = P_0$, where $\lambda_1$ is {\it any}
discontinuous linear functional on $E = \ell^1$ (e.g., $\lambda$
in the proof of ~\cite[Theorem 12.3]{15}).

It is well-known that every higher point derivation $(d_n)$ of
infinite order on the disc algebra $A(D)$ (resp., $H^\infty(U)$)
at a character is always natural and continuous. The same holds
for $\textrm{Hol}(U)$ and $\F$ in the Fr\'{e}chet case ~\cite[\S
8]{11} (also, see ~\cite[Theorem 3.7]{13} for the Fr\'{e}chet
algebra $C^\infty([0, 1])$). Moreover, one can claim the same
results for the analogous examples in the finitely many variables
case ~\cite[\S 8]{11}, but certainly not in the infinitely many
variables case, as discussed in the above paragraph. Thus, it is
crucial for our approach to consider only ``natural" higher point
derivation $(d_n)$ of infinite order on $\cal U$ at a character
$d_0$ (continuous or not). For such a natural $(d_n)$ on $\cal U$,
there is a (continuous or not) linear functional $\lambda_1$ on
$E$ such that $d_1 = \lambda_1 \circ P_1$, since $\cal U$ is a
weighted Fr\'{e}chet symmetric algebra over $E$ as below and $P_1$
is a projection onto $E$ (see introduction in \S 2 before
~\cite[Theorem 2.1]{14}). Further, we remark that if $(d_n)$ is
discontinuous on $\cal U$ (i.e., if at least one of the $d_n$ is
discontinuous), then, by the identities (2.1) of ~\cite{14},
$(d_n)$ must be totally discontinuous higher point derivation of
infinite order. As a consequence, if $(d_n)$ is natural and
discontinuous on $\cal U$, then $(d_n)$ is natural and totally
discontinuous on $\cal U$ in our sense; equivalently, the
character $d_0$ must be discontinuous. The converse is trivial,
since in that case, $d_1$ is discontinuous. So, if a character is
continuous on $\cal U$, then either $(d_n)$ is not natural (this
situation arises in the Banach case as well ~\cite[\S 2]{14}), or
$(d_n)$ is natural and continuous (i.e., $\cal U$ is a FrAPS
~\cite[Theorem 10.1 (ii)]{15}).\\

\noindent {\bf 3.2. Proofs of the main theorem and crucial
remarks.} We now discuss our approach, in order to solve Michael's
problem affirmatively. We first see that the test case $\cal U$ is
a Fr\'{e}chet algebra of the kind $\widehat{\bigvee}_{W}E$, where
$E = \ell^1(\zz^+)$ and $W = (\omega_m)$ is an increasing sequence
of weights on $\zz^+$ (note that the germ of the ideas for our
approach lies in ~\cite{34}). Following ~\cite[Definition 9.2 and
\S 10]{15}, we have Banach algebras $\ell^1(S,\, \omega) = {\cal
U}_1$, where $\omega \equiv 1$ on $S = (\zz^+)^{<\omega}$, and for
each $m \geq 2$, $\ell^1(S,\,\omega_m) = {\cal U}_{m} \cong
\widehat{\bigvee}_{\omega_{m}}E$, where $\omega_m$ is a weight on
$\zz^+$ defined by $\omega_m(|r|) = m^{|r|},\;r \in S =
(\zz^+)^{<\omega}$ (or, one may take $\omega_m$ as a weight on $S$
defined by $\omega_m(r) = m^{|r|}$). Then, $${\cal U} =
\bigcap_{m\in\nn}{\cal U}_m = \ell^1(S,\, W) \cong
\bigcap_{m\in\nn}\widehat{\bigvee}_{\omega_{m}}E =
\widehat{\bigvee}_{W}E.$$ All these algebras are graded
subalgebras of $\F_\infty$. We remark that the map $\epsilon$,
restricted to ${\cal U}_m$, is an isometric isomorphism of ${\cal
U}_m$ onto $\widehat{\bigvee}_{\omega_{m}}E$ (for $m\,=\,1$, see
Remarks A. 1. above) and the same map $\epsilon$, restricted to
$\cal U$, is an isometric isomorphism of ${\cal U}$ onto
$\widehat{\bigvee}_{W}E$ ~\cite[p. 142]{15}.

It is important to note the following chains of (dense) continuous
inclusions of certain algebras and their corresponding character
spaces. We have
\begin{equation} \label{inclusion}
{\cal U}\,\hookrightarrow\,{\cal U}_{m}\,\hookrightarrow\,{\cal
U}_{1}\,=\,\ell^1(S,
\{1\})\,\cong\,\widehat{\bigvee}_{\{1\}}E\,\hookrightarrow\,\widehat{\bigvee}E,
\end{equation}
where the density follows from the fact that all algebras are
countably generated by the monomials $X_1, X_2,\dots$, and the
Arens-Michael representation of $\cal U$. By ~\cite[Lemma
3.2.5]{23}, we have
\begin{equation} \label{spectra}
M(\widehat{\bigvee}_{\{1\}}E)\,\cong\,\ell_{[1]}^\infty\,\cong\,M(\ell^1(S,\,
\{1\}))\,\hookrightarrow\,M({\cal
U}_m)\,\cong\,\ell_{[m]}^\infty\,\hookrightarrow\,M({\cal
U})\,\cong\,\ell^\infty
\end{equation} ($w^*$-topology), where the homeomorphisms are due to Theorem 2.1
and ~\cite{17}, respectively.

Next, we recall that the test case $\cal U$ is, indeed, a FrAPS in
$\F$. We {\it claim} that there exists a natural, discontinuous
higher point derivation $(d_n)$ on $\cal U$, induced by a
discontinuous character $d_0 \,=\,\phi$. Such a $(d_n)$ obviously
turns out to be non-degenerate and totally discontinuous in our
sense, as discussed above. Note that if we prove our claim we have
our desire result. For since such a $(d_n)$ gives an epimorphism
$\theta$ from $\cal U$ onto $\F$ by ~\cite[Theorem 11.2]{15}, we
then apply the Dales-author-Read's method, used in the proof of
~\cite[Theorem 10.1]{15}, in order to show that $\theta$ is,
indeed, an isomorphism (i.e., $\cal U\,=\,\F$). This contradicts
the fact that $\cal U$ is a semi-simple algebra and $\F$ is a
local algebra or contradicts ~\cite[Theorem 10.1 (ii)]{15} and
~\cite[Corollary 3.4]{31} (cf., also, Remarks D. 5 below). The
crucial point of the main theorem should be emphasized. It is a
surprising consequence of the fact that one is able to show that
the test case $\cal U$ is, indeed, a weighted Fr\'{e}chet
symmetric algebra $\widehat{\bigvee}_{W}E$ as above. So if one
starts with a discontinuous character on $\cal U$, then one can
construct $(d_n)$ by applying the Dales-McClure method ~\cite{14,
12} in the following

\noindent {\it Proof of the main theorem.} Assume that there is a
discontinuous character $\phi$ on ${\cal
U}\,=\,\bigcap_{m\,\in\,\nn}{\cal U}_m$. Then, for each
$m\,\in\,\nn$, $\phi$ is a discontinuous character on the normed
algebra $({\cal U}, q_m)$, because if it is continuous for some
$m$, then there is $c\,>\,0$ such that
$$|\phi(f)|\,\leq\,cq_m(f), \;f\,\in\,({\cal U},\,(q_m)),$$ a
contradiction of the assumption that $\phi$ is discontinuous on
$\cal U$. Now, $\phi$ can be extended to a discontinuous linear
functional $\lambda_1$ on $({\cal U}_1, q_1)$ using a Hamel basis
of a complement of $\cal U$ in ${\cal U}_1$ (remark that there are
infinitely many ways of extending $\phi$; also, we cannot extend
$\phi$ as a discontinuous character on ${\cal U}_1$). By Lemma
2.3, the norm $q_1$ is equivalent to $\|\cdot\|_{\ell^1}$ on
$\ell^1$, and so, w.l.o.g., $\lambda_1$ is also a discontinuous
linear functional on $\ell^1\,=\,\hat{\bigvee}_{\{1\}}^{1}E$.
Alternatively, by a remark below ~\cite[Corollary 10]{9}, the
existence of a discontinuous character on $\cal U$ induces a
discontinuous linear functional on $\ell^1$.

Now, by ~\cite[Theorem 2.1 (ii)]{14}, there are linear functionals
$\lambda_n$ on $\textrm{SE}_\pi(n)$, for $n = 2, 3, \dots$, such
that (2.1) of ~\cite{14} holds for any positive integers $n$ and
$m$, $u \in \textrm{SE}_\pi(n)$ and $v \in \textrm{SE}_\pi(m)$. As
discussed in the beginning of ~\cite[\S 2]{14}, define $d_n
=\lambda_n \circ P_n$, for $n \in \nn$. Then $(d_n)$ is a
``natural", discontinuous higher point derivation of infinite
order on $\cal U$ at a discontinuous character $d_0 = \phi$ (note
that since $\cal U$ is a subalgebra of $\widehat{\bigvee}E$, we
are free to apply arguments from ~\cite[\S 2]{14}). Moreover,
$(d_n)$ is, indeed, non-degenerate and totally discontinuous in
our sense. This is clear since $d_1$ is discontinuous on $\cal U$
as $d_0$ is discontinuous on $\cal U$ (see the proof of
~\cite[Theorem 11.2]{15}), and the discontinuity of the remaining
$d_n$ on $\cal U$ also follows from the fact that each $\lambda_n$
($n \geq 2$) is discontinuous since $\lambda_1$ is discontinuous
and the fact that (2.1) of ~\cite{14} holds.

Thus we have the required existence of a natural, discontinuous
higher point derivation $(d_n)$ of infinite order on $\cal U$ at a
discontinuous character $\phi$. By ~\cite[Theorem 11.2]{15}, this
$(d_n)$ induces an epimorphism $\theta$ from $\cal U$ onto $\F$.
Now, we show that $\theta$ is, indeed, an injection. Our argument
here is kept short because it uses the key ideas from the proof of
~\cite[Theorem 10.1]{15}, but the map $\theta$ that we are working
with, is entirely different (i.e., induced by a natural, totally
discontinuous $(d_n)$ in our sense).

Our first remark is the following. There is a sequence
$(g_i)_{i\,\in\,\nn}$ of singular elements in $\F$ such that
$\theta(X_i)\,=\,g_i$. We claim that at least one member of this
sequence has the order $1$. Indeed, assume towards a contradiction
that $o(g_i)\,>\,1$ for all $i\,\in\,\nn$. Since
$(X_i)_{i\,\in\,\nn}$ generates the Fr\'{e}chet algebra $\cal U$,
for any singular element $f$, $o(\theta(f))\,>\,1$; equivalently,
$X$ is not in the range of $\theta$, a contradiction of the fact
that $\theta$ is an epimorphism. In fact, w.l.o.g., we may assume
that $\theta(X_1)\,=\,g_1\,=\,X$, since if $g_1$ has the initial
form $X$ in $\F$, then there is an automorphism $\alpha$ of $\F$
such that $\alpha(g_1)\,=\,X$, and in this case,
$\alpha\,\circ\,\theta\,=\,\Theta$ is a discontinuous epimorphism
such that $\Theta(X_1)\,=\,X$.

Our main {\bf claim} is that we can choose the sequence
$(g_i)_{i\,\in\,\nn}$ of singular elements in $\F$ with
$g_1\,=\,X$ such that $o(g_i)\,\geq\,i\;(i\,\in\,\nn)$ so that the
map $\theta$ is an injection. Incidentally, it is a somewhat
surprising fact that we can follow the proof of ~\cite[Theorem
10.1]{15} in our case since the arguments there are purely
set-theoretic and algebraic while dealing with selection of
$(g_i)$ in $\F$ with the stated conditions and ~\cite[Lemma
10.2]{15}. We give a proof for the sake of completeness.

The first step in our construction is to specify a function
$\gamma\,:\,\nn\,\rightarrow\,\nn$ with the following properties:
we have $\gamma_i\,\leq\,i\; (i\,\in\,\nn)$ and, for each
$n\,\in\,\nn$ and each $r\,=\,(r_1,\dots,\,r_n)\,\in\,\nn^n$,
there exists $k\,\in\,\nn$ such that
\begin{equation} \label{10.2} (\gamma(k + 1),\dots,\,\gamma(k +
n))\,=\,(r_1,\dots, \,r_n).
\end{equation}
Such a function is easily constructed by listing all the elements
in the countable set
$$\bigcup\{(r_1,\dots,\,r_n)\,\in\,\nn^n\,:\,n\,\in\,\nn\}$$ in
one sequence and by regarding the elements in this listing as
successive parts of a function in $\nn^\nn$. Note that, in this
case, there are infinitely many values of $k\,\in\,\nn$ such that
equation (\ref{10.2}) holds for each specified value of $r$.

For each $i\,\in\,\nn$ with $i\,\geq\,2$, we define
$$E_i\,=\,\{j\,\in\,\nn\,\setminus\,\{1\}\,:\,\gamma(j)\,=\,i\},$$ and we
take $E_1\,=\,\{1\}$ so that $\{E_i\,:\,i\,\in\,\nn\}$ is a
partition of $\nn$, and each $E_i$ save for $E_1$ is infinite.
Further, $\textrm{min}\,E_i\,\geq\,i\;(i\,\in\,\nn)$.

We now take a 'rapidly increasing sequence'
$(c_i\,:\,i\,\in\,\nn)\,\in\,\nn^\nn$ with $c_1\,=\,1$. In fact,
we shall write $(c_j\,:\,j\,\in\,\nn)$ as
$(a_1,\,b_1,\,a_2,\,b_2,\dots)$, where
$$1\,=\,a_1\,<\,b_1\,<\,a_2\,<\,b_2\,<\,\cdots.$$ The growth
conditions that we shall impose are:
\begin{equation} \label{10.3} a_{i + 1}\,>\,ia_i\;\;\;\;(i\,\in\,\nn)
\end{equation}
and
\begin{equation} \label{10.4} b_i\,>\,i\cdot(i(1 +
a_i))!\,\cdot\,i^{i(1 + a_i)}\,\cdot\,b_{i-1}^{i(1 +
a_i)}\;\;\;\;\,(i\,\geq\,2).
\end{equation}
Clearly, we can choose the sequence $(c_j\,:\,j\,\in\,\nn)$ to
satisfy these constraints.

For each $i\,\in\,\nn$, we define
\begin{equation} \label{10.5}
g_i\,=\,\sum\{b_jX^{a_j}\,:\,j\,\in\,E_i\}\,\in\,M\,\subset\,\F,
\end{equation}
where $M$ is the unique maximal ideal. Note that, since $a_i \geq
i$ and $\textrm{min}\,E_i \geq i$ for each $i \in \nn$, we have
$o(g_i) \geq i\;(i \in \nn)$, as required in the above remarks.

Our claim will follow easily from the following lemma ~\cite[Lemma
10.2]{15}, whose proof we omit. We continue to denote the
semigroup $(\zz^+)^{<\omega}$ by $S$.
\begin{lem} \label{lem_10.2} Let $m \in \nn$. Let
$(r_1,\dots, r_m, 0, 0,\dots) \in S$ be such that $r_1 \leq r_2
\leq \cdots \leq r_m,$ and let $k \in \nn$ be such that $k
> m$ and $(\gamma(k + 1),\dots, \gamma(k +
m)) = (r_1,\dots,\,r_m)$. Set $P\,=\,\sum_{i = 1}^m a_{k + i}$ and
$Q\,=\,\prod_{i = 1}^m b_{k + i}$.
\begin{enumerate}
\item[{\rm (i)}] We have $$\pi_P(g_{r_1}\cdots
g_{r_m})\,\geq\,Q.$$ \item[{\rm (ii)}] Provided that the sequence
$(c_j\,:\,j\,\in\,\nn)$ satisfies equations (\ref{10.3}) and
(\ref{10.4}), we have $$\pi_P(g_{s_1}\cdots g_{s_n})\,\leq\,Q/k,$$
for each $(s_1,\dots, s_n, 0, 0,\dots) \in S$ with $\{s_1,\dots,
s_n\} \neq \{r_1,\dots, r_m\}.$
\end{enumerate}
\end{lem}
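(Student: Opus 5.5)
The plan is to expand the product $g_{r_1}\cdots g_{r_m}$ directly from the definition (\ref{10.5}). Since every coefficient $b_j$ is a positive integer, all coefficients of the product are nonnegative. Write
$$g_{s_1}\cdots g_{s_n}\,=\,\sum\Bigl\{\prod_{t=1}^n b_{j_t}X^{\sum_t a_{j_t}}\,:\,j_t\,\in\,E_{s_t}\Bigr\},$$
so that $\pi_P(g_{s_1}\cdots g_{s_n})$ is the sum of $\prod_t b_{j_t}$ over all tuples $(j_1,\dots,j_n)$ with $j_t \in E_{s_t}$ and $\sum_t a_{j_t} = P$.

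For (i), consider the particular tuple $j_t = k+t$. Because $\gamma(k+t) = r_t$ and $k+t \geq 2$, we have $k+t \in E_{r_t}$. The exponent of this tuple is $\sum_t a_{k+t} = P$ and its coefficient is $Q$. Every other contribution is nonnegative, so $\pi_P \geq Q$.

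For (ii), the goal is to show that any tuple with exponent sum $P$ either has a product of coefficients that is tiny compared with $Q$, or is essentially a rearrangement of $(k+1,\dots,k+m)$. In the latter case the multiset $\{s_t\}$ equals $\{r_t\}$, because the sets $E_i$ are disjoint.

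\begin{enumerate}
\item[{\rm (a)}] Let $J = \max_t j_t$. If $J > k+m$, then $a_J > m\,a_{k+m} \geq P$ by (\ref{10.3}), which is impossible. So every $j_t \leq k+m$.
\item[{\rm (b)}] By the super-exponential gap (\ref{10.3}), $a_{k+m}$ exceeds the sum of any bounded number of smaller $a$'s. Combining this with the unique "base-$a$" decomposition one gets from (\ref{10.3}), the index $k+m$ must occur. Subtracting $a_{k+m}$ and inducting downward, the indices $k+m, k+m-1, \dots, k+1$ each occur once. The remaining indices $j_t \leq k$ must then have $a$-sum zero, so they are absent. Here one must also control $n$: since $a_j \geq 1$, we get $n \leq P \leq m(1+a_{k+m})$.
\item[{\rm (c)}] If $\{s_t\} \neq \{r_t\}$, then by (b) the exact representation fails. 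So some index $j_t$ is replaced by smaller indices whose $a$'s sum to the same total. Then the largest index present that differs, say $k+i$, is absent, and all $b$'s involved are at most $b_{k+i-1}$ or match those in $Q$.
\item[{\rm (d)}] Bound the total contribution by (number of tuples)$\,\times\,$(maximal product). The number of tuples is at most $n!\cdot(\text{number of admissible } j)^n$. Each such product is at most $\prod_{t\neq i} b_{k+t}\cdot b_{k+i-1}^{\,i(1+a_{k+i})}$, using that $n$ is bounded by the relevant exponent.
\item[{\rm (e)}] The growth condition (\ref{10.4}), applied at index $k+i$, then gives a total of at most $b_{k+i}/(k+i) \cdot \prod_{t\neq i} b_{k+t} \leq Q/k$.
\end{enumerate}

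The main obstacle is step (c)–(d): organizing the tuples so that the missing index $k+i$ is genuinely the top mismatch. Only then does the factor $b_{k+i}$ lost from $Q$ dominate everything, with the count of tuples and the powers of $b_{k+i-1}$ exactly absorbed by (\ref{10.4}). I would first set $i$ to be the largest $t$ such that $k+t$ does not appear, and bound the lower-index part by $(i(1+a_{k+i}))!\,(k+i)^{i(1+a_{k+i})}\,b_{k+i-1}^{i(1+a_{k+i})}$, matching (\ref{10.4}) term by term.
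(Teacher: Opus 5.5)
The paper omits this proof (it defers to Dales--Patel--Read), so this assesses your argument on its own. Your part (i) and step (a) are fine, and the idea in your last paragraph is the right one: sort the tuples by the largest missing index $k+i$ and beat them with (\ref{10.4}) at index $k+i$. However, step (b) is false as stated. A tuple with exponent sum $P$ need not contain $k+m$, let alone be a rearrangement of $(k+1,\dots,k+m)$. For instance, $s=(1,\dots,1)$ with $n=P$ gives $\pi_P(g_1^P)\ge b_1^P>0$ from the all-ones tuple. Condition (\ref{10.3}) only guarantees that a sum of at most $j$ terms from $\{a_1,\dots,a_j\}$ stays below $a_{j+1}$. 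Nothing keeps $n$ that small, so there is no unique ``base-$a$'' decomposition, and your (c) then contradicts (b).

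What is true is a one-sided peeling. Since $a_{k+l+1}>(k+l)a_{k+l}\ge P_l:=\sum_{t\le l}a_{k+t}$, there are two cases.
\begin{enumerate}
\item[{\rm (1)}] All of $k+1,\dots,k+m$ occur. Then each occurs exactly once and nothing else occurs, so disjointness of the $E_i$ forces $\{s_t\}=\{r_t\}$ as multisets.
\item[{\rm (2)}] Otherwise, let $i$ be the largest index with $k+i$ absent. Then $k+i+1,\dots,k+m$ each occur exactly once, and the remaining $n'=n-(m-i)$ indices are all $\le k+i-1$, with $a$-sum exactly $P_i$.
\end{enumerate}
You need to prove this in place of (b)--(c). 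It is also what gives the class-dependent bounds $n'\le P_i\le ia_{k+i}$ and $n\le m+ia_{k+i}$ that (d) needs. Your global bound $n\le P\le m(1+a_{k+m})$ is useless against (\ref{10.4}) at index $k+i<k+m$, because $a_{k+m}>b_{k+i}$.

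Step (e) has a second gap. It bounds the tuples with a fixed missing index $k+i$ by $Q/(k+i)$. But there are $m$ such classes, and $\sum_{i=1}^m Q/(k+i)>Q/k$ as soon as $m,k\ge 2$, so matching (\ref{10.4}) ``term by term'' leaves no room for the sum. You must extract slack, as follows.
\begin{enumerate}
\item[{\rm (1)}] In a class-$i$ tuple the lower part has $n'\le ia_{k+i}$ factors, each at most $b_{k+i-1}$.
\item[{\rm (2)}] By contrast, (\ref{10.4}) at index $k+i$ allows the exponent $M=(k+i)(1+a_{k+i})\ge n'+k$, and $b_j\ge b_1\ge 2$.
\item[{\rm (3)}] The number of class-$i$ tuples is at most $n^{m-i}(k+i-1)^{n'}$. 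This count includes placing the $m-i$ top indices among the $n$ positions, and using $n\le M$ and $m<k$ it is at most $(k+i)^M$.
\end{enumerate}
Together these show that class $i$ contributes at most $2^{-k}Q/(k+i)$. Summing over the $m<k$ classes then gives at most $Q/k$.

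Separately, with $E_1=\{1\}$ as literally defined, $k+t\in E_{r_t}$ fails when $r_t=1$, and $\{E_i\}$ is not actually a partition of $\nn$. So your proof of (i) tacitly assumes a repaired set-up; this is a flaw in the stated construction, not in your reasoning.
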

We now prove that the fact that $\theta$ is injective follows from
Lemma \ref{lem_10.2}.

Each element $f\,\in\,\cal U$ can be written in the form
$$f\,=\,\sum \beta_{(t_1,\dots,\,t_m)}\, X_{t_1}\cdots \,X_{t_m},$$ where
$t_1 \leq t_2 \leq \cdots \leq t_m$. Take such an element with $f
\neq 0$; we shall show that $\theta(f) \neq 0$. We may suppose for
convenience that $q_1(f) = 1$ since $f \in {\cal U} \subset {\cal
U}_1$ (recall that $q_1 = \|\cdot\|_{\ell^1(S)}$ and ${\cal U}_1 =
\ell^1(S)$ in our notation). Choose a specific element $t =
(t_1,\dots, t_m, 0, 0, \dots) \in S$ for which $\beta_t \neq 0$.
Then there exists $k\, \in\, \nn$ with $k\,
>\, 1/|\beta_t|$ and such that $(\gamma(k + 1),\dots, \gamma(k +
m))\, =\, (t_1, \dots, t_m)$. Define $P$ and $Q$ with respect to
the elements $t \in S$ and $k \in \nn$ as in Lemma \ref{lem_10.2}.
By clauses (i) and (ii) of that lemma, we have
\begin{equation} \label{inequality 1} |\pi_P(\beta_tg_{t_1}\cdots\,g_{t_m})|\,\geq\,Q|\beta_t|,
\end{equation}
and
\begin{equation} \label{inequality 2} \begin{split}
|\pi_P(\theta(f) - \beta_tg_{t_1}\cdots g_{t_m})| & =
|\pi_P(\sum_{s \in S}\beta_sg_{s_1}\cdots g_{s_n} : \{s_1,\dots,
s_n\} \neq
\{t_1,\dots, t_m\})|\\
& \leq \textrm{sup}\{\pi_P(g_{s_1}\cdots\,g_{s_n} : \{s_1,\dots,
s_n\} \neq \{t_1, \dots, t_m\}\}\\
& \leq Q/k,
\end{split}
\end{equation}
where we recall that $\sum_{s \in S}|\beta_s|\,=\,1$. It follows
that
$$|\pi_P(\theta(f))|\,\geq\,Q\,\cdot\,(|\beta_t| -
1/k)\,>\,0,$$ and so $\theta(f) \neq 0$ in $\F$, as required to
complete the proof of injectivity of $\theta$.

Thus, $\cal U = \F$, a contradiction as discussed in the preceding
paragraph to the start of proof. Hence all characters on $({\cal
U}, (q_m))$ are continuous. By ~\cite[Proposition 7]{9} or
~\cite[Proposition 2.5]{20}, they are, indeed, the point
evaluation mappings, i.e., if $\phi$ is a character on $\cal U$,
then there exists $z \in \ell^\infty$ such that $\phi = \phi_z$,
where $\phi_z(f) = f(z)$ for all $f \in \cal U$. This completes
our proof. $\hfill \Box$\\

\noindent {\it Remarks B.} 1. It is essential to emphasize two
important remarks of our proof as follows. First, we can choose
such a sequence $(g_i)$ in $\F$ since the range of $\theta$ is the
whole $\F$, which allows necessary freedom in selection of
$(g_i)$. Second, having had the sequence $(g_i)$ in $\F$ with the
stated conditions, it is not necessary that the map $\theta$ on
$\cal U$ would always be a unique continuous injection as in
~\cite[Theorem 10.1]{15}; one may be able to construct a unique
discontinuous isomorphism as seen in the proof above. In other
words, the uniqueness of the injective map $\theta$ in the proof
of ~\cite[Theorem 10.1]{15} is up to the continuity, and the
uniqueness of the discontinuous isomorphism $\theta$ in our proof
is up to the injectivity. This means that if we drop the
continuity, then we have a discontinuous epimorphism
~\cite[Theorem 11.2]{15}, and if we drop the injectivity, then we
have a continuous epimorphism ~\cite[Theorem 4.4.9 (ii)]{12} and
~\cite[Theorem 3.7]{13}. Here is another possibility: one might
have a unique continuous isomorphism as seen in ~\cite[Theorem
4.4.9 (ii)]{12} (consider the quotient map by
$\textrm{ker}\,\theta$, discussed in ~\cite[4.3]{32} for this
case). In other words, the uniqueness of the continuous
isomorphism in ~\cite[4.3]{32} is up to the injectivity. This
means that if we drop the injectivity, then we have a
discontinuous epimorphism ~\cite[Theorem 11.2]{15}. In this case,
we also may have a discontinuous epimorphism, induced by a
``non-natural", discontinuous $(d_n)$ on $\cal U$; see 3. below
for details.

\noindent 2. The author asked whether every (surjective)
homomorphism $\theta : B \rightarrow \F$ from a non-Banach
Fr\'{e}chet algebra $B$ is continuous ~\cite[p. 135]{31}. Then,
Dales, author and Read further took up this question, in order to
affirmatively answer the Dales-McClure's problem ~\cite[Theorem
11.2]{15}. Now, we apply the Dales-McClure's method and the
Dales-author-Read's method to prove the main theorem. This clearly
establishes the fact that how all this works have turned out to be
the stepping stones for the complete solution to Michael's problem
since 2004, as discussed in Introduction.

\noindent 3. In view of our approach, some crucial remarks on the
Dales-author-Read's method are in order. As discussed in the
beginning of the section (resp., in ~\cite[\S 2]{14} in the Banach
case), one may start with {\it any} discontinuous linear
functional $\lambda_1$ on $\ell^1$, in order to produce a
``non-natural", totally discontinuous $(d_n)$ of infinite order on
$\cal U$ (resp., on ${\cal U}_m$ for each $m$ in the Banach case)
at a continuous character $d_0 = P_0$. This would then lead us to
a discontinuous epimorphism $\theta$ from $\cal U$ (resp., ${\cal
U}_m$ for each $m$) onto $\F$ by ~\cite[Theorem 11.2]{15} (resp.,
~\cite[Theorem 2.3]{14}). This $\theta$ cannot be injective,
because $\textrm{ker}\,\theta$ is a prime ideal of $\cal U$
(resp., of ${\cal U}_m$ for each $m$), which is dense in
$\textrm{ker}\,d_0$ of $\cal U$ (resp., of ${\cal U}_m$ for each
$m$), such that the quotient algebra ${\cal
U}/\textrm{ker}\,\theta$ (resp., ${\cal U}_m/\textrm{ker}\,\theta$
for each $m$) is isomorphic to $\F$ (see ~\cite[Remark 3-17
(2)]{20} for the Banach case). Thus the Dales-author-Read's method
has been used to show that the map $\theta$ is an injection in two
extreme cases: (1) to show that the test case $\cal U$ for
Michael's problem is, indeed, a FrAPS in $\F$, where the
corresponding $(d_n)$ is a natural, continuous higher point
derivation of infinite order on $\cal U$ at a continuous character
$d_0 = P_0$; and (2) to solve Michael's problem in the
affirmative, where the corresponding $(d_n)$ is a natural, totally
discontinuous higher point derivation of infinite order on $\cal
U$ at a discontinuous character $d_0 = P_0$ so as to arrive at a
contradiction as discussed in the proof above (and below).

\noindent {\it Alternative proof of the main theorem.} We bypass
the proof of showing $\theta$ injective in the above proof as
follows. First, the {\it separating space}, ${\cal S}(T)$, of the
linear map $T : E \rightarrow F$, where $E$ and $F$ are
$(F)-$spaces, is defined as
$${\cal S}(T) = \{y \in F : \textrm{lim}_{n \rightarrow
\infty}Tx_n = y\; \textrm{for some sequence}\; (x_n) \in E\;
\textrm{with} \;x_n \rightarrow 0\}.$$ By ~\cite[Proposition
5.1.2]{12}, ${\cal S}(T)$ is a closed linear subspace of $F$; and
$T$ is continuous if and only if ${\cal S}(T)\,=\,\{0\}$. Now
suppose that $A$ and $B$ are $(F)-$algebras and that
$\theta\,:\,B\,\rightarrow\,A$ is a homomorphism such that
$\theta(B)$ is dense in $A$. Then it is easily checked that ${\cal
S}(\theta)$ is a closed ideal in $A$ ~\cite[\S 6]{15} (see
~\cite[Proposition 5.1.3]{12} for the Banach algebra case).

Following the first proof, we have a discontinuous epimorphism
$\theta : {\cal U} \rightarrow \F$, induced by a natural, totally
discontinuous higher point derivation $(d_n)$ of infinite order on
$\cal U$ at a discontinuous character $d_0 = \phi$, with
$\theta(X_1) = X$. Then the separating space ${\cal S}(\theta)$ is
a closed ideal in $\F$. Hence, by ~\cite[Example 1]{4}, ${\cal
S}(\theta) = \F X^n \subset \F X$ for some $n\,\in\,\nn$, and so
its preimage is ${\cal U}X_1^n$. Further, since $d_0$ is a
discontinuous character on $\cal U$, the separating space ${\cal
S}(d_0)$ is a non-zero closed ideal in $\cc$, and so ${\cal
S}(d_0)\,=\,\cc$, with $\cal U$ as its preimage. Moreover,
$\textrm{ker}\,d_0$ is a dense maximal ideal in $\cal U$. Now, as
discussed above, $\textrm{ker}\,\theta$ is a prime ideal of
infinite codimension in $\cal U$, which is dense in
$\textrm{ker}\,d_0$, such that the quotient algebra ${\cal
U}/\textrm{ker}\,\theta$ is isomorphic to $\F$. Hence
$\textrm{ker}\,\theta$ is also dense in $\cal U$. Since
$\textrm{ker}\,\theta \subset {\cal U}X_1^n \subset {\cal U}X_1$,
and, by ~\cite[p. 546]{20}, ${\cal U}X_1$ is a closed, prime ideal
in $\cal U$, we have a contradiction of the fact that
$\textrm{ker}\,\theta$ is also dense in $\cal U$. This completes
the proof of the main theorem. $\hfill \Box$

We remark that Dales and McClure cooked their method up to show
that the statement (C) of ~\cite{13} is false, leading to an
important contribution in automatic continuity theory (cf. Theorem
2.2 and a question in ~\cite{14}). Thus this method provides an
ingenious construction of linear functionals $\lambda_n$ on
$\textrm{SE}_\pi(n)$ (for $n\,\geq\,2$), and was used on purpose
(so is in our case as well).

\noindent 4. There are no characters $\phi$ on ${\cal U} =
\textrm{Hol}(\ell^\infty)$, not equal to evaluation at $z \in
\ell^\infty$, such that $\phi(f) = f(z)$ whenever $f \in
\textrm{Hol}(\ell^\infty)$ is finitely determined
(cf.~\cite[Theorem 9]{9}). More generally, by~\cite[Theorem
4.4]{10}, there are no characters $\phi$ on $\Upsilon(\ell^\infty)
= O(\ell^\infty)$, not equal to evaluation at $z \in \ell^\infty$,
such that $\phi(\tilde{f}) = f(z)$ whenever $\tilde{f} \in
\Upsilon(\ell^\infty) = O(\ell^\infty)$ is a finitely determined
function germ.

\noindent 5. Clayton showed that the continuous character on $\cal
U$ is the point evaluation mapping and vice versa
~\cite[Proposition 7]{9}. Dales, author and Read showed that the
character $\pi_0$ on a FrAPS $\cal U$ is continuous
~\cite[Corollary 11.5]{15}. These two results jointly were another
starting point for author to affirmatively answer Michael's
problem as follows.

Clearly, for each $z \in \ell^\infty$, $\textrm{ker}\,\phi_z = \{f
\in {\cal U} : f(z) = 0\}$ is a closed maximal ideal of $\cal U$.
On the other hand, by identifying $f$ with its Taylor series
expansion about $z \in \ell^\infty$, $\textrm{ker}\,\pi_0 = \{f
\in {\cal U} : \pi_0(f) = f(z) = 0\}$, the closed maximal ideal of
$\cal U$ consisting of $f$ vanishing at $z \in \ell^\infty$, since
$\pi_0$ is a continuous character. So, $\pi_0 = \phi_z$ for some
$z \in \ell^\infty$. Recall that author asked \.Zelazko in 2004
[Personal Communication] whether every FrAPS is functionally
continuous. The reason to ask this question was that author had a
strong belief that it should be very difficult to find a FrAPS
with a discontinuous character.

As a corollary, we have an affirmative solution to the Michael's
problem.
\begin{cor} \label{Cor Michael} Every commutative Fr\'{e}chet
algebra is functionally continuous.
\end{cor}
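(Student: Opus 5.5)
The plan is to deduce Corollary~\ref{Cor Michael} from the Main Theorem via the known reduction of Michael's problem to the test algebra $\cal U$. The reduction I have in mind is the Dixon--Esterle/Clayton one (see \cite{17, 9}). Suppose a commutative Fr\'{e}chet algebra $A$ has a discontinuous character. Then one produces a sequence $(a_i)$ in $A$ and a continuous homomorphism $\Psi : {\cal U} \rightarrow A$ (equivalently, a continuous map $\ell^\infty \to$ suitable data) such that composing the bad character with $\Psi$ gives a discontinuous character on $\cal U$. The Main Theorem forbids this.

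\textbf{Step 1 (reduction to the unital case).} If $A$ lacks an identity, I adjoin one, $A^\# = A \oplus \cc$, with seminorms $p_m(a) + |\alpha|$. Any character $\phi$ on $A$ extends to a character $\phi^\#$ on $A^\#$, and $\phi$ is continuous if and only if $\phi^\#$ is. So I may assume that $A$ is unital with defining submultiplicative seminorms $(p_m)$.

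\textbf{Step 2 (choosing the data).} Let $\phi$ be a discontinuous character on $A$. By \cite[Theorem 1]{18}, $\phi$ is then unbounded on some bounded set. Hence there is a bounded sequence $(x_i)$ with $|\phi(x_i)| \rightarrow \infty$; after passing to a subsequence, I may take $|\phi(x_i)| \geq i$. I rescale to $a_i := x_i/i^{1/2}$, or by a similar choice of scalars, so that two things hold. First, $\sum_i p_m(a_i)\,c_i < \infty$ for suitable $c_i$ and each $m$, with $p_m(a_i) \leq r_m$ for each $m$. Second, $(\phi(a_i))$ is still unbounded.

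\textbf{Step 3 (the homomorphism).} Define $\Psi(X_i) = a_i$ and extend by multiplicativity: a monomial $X^r$ goes to $a^r := \prod_i a_i^{r_i}$, and a general element $f = \sum_r \lambda_r X^r \in {\cal U}$ goes to $\sum_r \lambda_r a^r$. To see that this converges, I use submultiplicativity. Since $p_m(a^r) \leq r_m^{|r|}$, I get $p_m(\Psi f) \leq \sum_r |\lambda_r|\, r_m^{|r|} \leq q_{k}(f)$ for an integer $k \geq r_m$. This uses the description ${\cal U} = \ell^1(S,W)$ with $\omega_k(r) = k^{|r|}$ from \S 3.2. Thus $\Psi$ is a continuous unital homomorphism ${\cal U} \rightarrow A$.

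\textbf{Step 4 (conclusion).} Now $\phi \circ \Psi$ is a character on $\cal U$. By the Main Theorem it equals $\phi_z$ for some $z \in \ell^\infty$, so $\phi(a_i) = \phi_z(X_i) = z_i$. This sequence is bounded, contradicting the unboundedness of $(\phi(a_i))$ from Step 2. Hence every character on $A$ is bounded on bounded sets, that is, $A$ is functionally bounded, and so, by \cite[Theorem 1]{18}, it is functionally continuous.

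The main obstacle is Step 2. I must arrange a uniform bound $\sup_i p_m(a_i) < \infty$ for each $m$ while keeping $\phi(a_i)$ unbounded. Boundedness of $(x_i)$ gives exactly $\sup_i p_m(x_i) < \infty$ for every $m$, so I will simply take $a_i = x_i$ and $r_m := \max(1, \sup_i p_m(x_i))$. I expect this to suffice, so that the delicate rescaling is unnecessary.
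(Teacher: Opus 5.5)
Your argument is correct and follows the paper's route: the paper simply records the corollary as a consequence of the Main Theorem together with the known Clayton/Dixon--Esterle fact that $\cal U$ is a test case, and your Steps 1--4 make that reduction explicit. Taking $a_i=x_i$ is indeed enough, since $\sup_i p_m(x_i)<\infty$ for each $m$ gives $p_m(\Psi f)\leq C_m\,q_k(f)$ for an integer $k\geq r_m$; continuity of $\phi\circ\Psi$ alone already contradicts $|\phi(x_i)|\to\infty$, because $q_k(X_i)=k$ for all $i$.

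Be aware that, exactly as in the paper, everything rests on the Main Theorem, which you take as given. Its proof in the paper needs independent checking: for instance, Lemma 2.3's claim $\ell^1(S)\cong\ell^1(\zz^+)$ cannot hold, because the character spaces (the polydisc $\ell^\infty_{[1]}$ of (3.2) and the closed unit disc) are not homeomorphic.
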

\noindent {\it Remarks C.} 1. The author extended the notion of
Stein (resp., Riemann) algebras to locally Stein (resp., locally
Riemann) algebras in ~\cite{36} (resp., in ~\cite{32}). As an
application of this corollary, we see that every character on a
(locally) Stein (resp., Riemann) algebra is necessarily
continuous, extending the results of ~\cite{22, 19}. In
~\cite{19}, Ephraim further extended the Markoe's result by
exposing the elementary nature of the Forster's theorem (Markoe's
result extends the Forster's result by taking the dimension of the
singularity set of $X$ finite in place of the dimension of $X$
finite). Recall that any character on a Stein algebra $A$ is a
point evaluation map if $M(A)$ is finite-dimensional
~\cite[Theorem 4]{22}, and Ephraim extended this result
~\cite[Theorem 2.3]{19}. Not only this, but $X\,=\,M(A)$ in view
of ~\cite[Theorem 1]{22}, that is, $X$ is a domain of holomorphy
(cf.~\cite[Chapters 2, 11, 12]{23}). In a more general case,  by
~\cite[Corollary 4.4]{36}, any character on a semi-simple locally
Stein algebra $A$ is a point evaluation map if $Y$ is dense in
$M(A)$ (this hypothesis is the Corona theorem in the theory of SCV
while solving Gleason's problem; follow discussion after Corollary
4.4).

\noindent 2. We have a few more interesting consequences as
follows. First, recall that Michael also asked whether every
character on a commutative, complete LMC-algebra is bounded
~\cite[\S 12, Question 2]{29}. Dixon and Fremlin showed that the
two problems are, in fact, equivalent ~\cite{18}. Akkar gave in a
nice interpretation of this fact: the two problems are equivalent
because every complete LMC algebra is, as a ``bornological
algebra", isomorphic to inductive limit of a family of Fr\'{e}chet
algebras ~\cite{1}. The equivalence of the two questions also
follows from the fact that there is a complete LMC-algebra,
produced by Craw ~\cite{10} (see below), such that the existence
of a discontinuous character on some commutative, unital
Fr\'{e}chet algebra would imply the existence of an unbounded
character on the complete LMC-algebra. We have
\begin{cor} \label{Michael_LMC} Every commutative,
complete LMC-algebra is functionally bounded.
\end{cor}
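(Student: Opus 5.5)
The plan is to deduce the statement from Corollary \ref{Cor Michael} through the known equivalence between Michael's two questions. There are two routes, and I would use the second as the main line.

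\emph{First route: cite the equivalence.} Dixon and Fremlin \cite{18} show that the existence of an unbounded character on some commutative, complete LMC-algebra forces the existence of a discontinuous character on some commutative Fr\'{e}chet algebra. Combined with Corollary \ref{Cor Michael}, which rules out the latter, this gives the statement at once.

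\emph{Second route: a self-contained reduction via Akkar's bornological picture \cite{1}.} Let $A$ be a commutative, complete LMC-algebra and $\phi$ a character on $A$. Let $B\subset A$ be bounded; I must show $\phi(B)$ is bounded. I would argue by contradiction and choose $b_k\in B$ with $|\phi(b_k)|\to\infty$.

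\begin{enumerate}
\item Since $B$ is bounded, the scalars $t_k:=2^{-k}$ satisfy $t_k b_k\to 0$ in every continuous seminorm. More usefully, for any rapidly decreasing positive sequence $(t_k)$, the set $\{t_k b_k\}$ is a null sequence.
\item Let $C$ be the closed absolutely convex hull of $\{b_k\}$. It is bounded and, by completeness, Banach-disc-like. Consider the closed unital subalgebra $A_0$ generated by $\{b_k\}$ with respect to the topology given by the (countably many, after restriction) seminorms $p_\alpha$ on the countably generated subalgebra. Concretely, I would build a Fr\'{e}chet algebra $F$ by completing the free polynomial algebra $\cc[X_1,X_2,\dots]$ under the countable family of submultiplicative seminorms $r_m(\sum c_J X^J)=\sum|c_J| m^{|J|}$, weighted so that $X_k\mapsto b_k$ extends to a homomorphism $\psi:F\to A$. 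Boundedness of $B$ makes $p(b_k)\le M_p$ for each continuous submultiplicative seminorm $p$ on $A$, so $p(\psi(f))\le\sum|c_J|M_p^{|J|}\le r_m(f)$ for $m\ge M_p$. Thus $\psi$ is continuous from $F$ into the completion, and completeness of $A$ gives $\psi$ well defined.
\item Then $\phi\circ\psi$ is a character on the commutative Fr\'{e}chet algebra $F$. Note that $F$ is essentially the test algebra $\cal U$. By Corollary \ref{Cor Michael} (indeed by the Main Theorem), $\phi\circ\psi$ is continuous, hence bounded by $C\,r_m$ for some $m$. But $r_m(X_k)=m$ for all $k$, so $|\phi(b_k)|=|\phi\circ\psi(X_k)|\le Cm$, a contradiction.
\end{enumerate}

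The main obstacle is step 2: ensuring that $\psi$ is genuinely defined on the completion when $A$ is only complete rather than metrizable, that is, that the series $\sum c_J b^J$ converge in $A$. Here I would use that each partial-sum net is Cauchy for every seminorm $p$, by the estimate above, and invoke completeness of $A$. This is exactly the point where the Dixon--Fremlin \cite{18} and Akkar \cite{1} analyses are needed, so I would cite them if the direct estimate becomes delicate.
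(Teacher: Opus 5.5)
Your first route is exactly the paper's argument. The paper obtains Corollary~\ref{Michael_LMC} simply by combining Corollary~\ref{Cor Michael} with the Dixon--Fremlin equivalence \cite{18}, mentioning Akkar \cite{1} and Craw \cite{10} only as explanations of why that equivalence holds.

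Your second route, which you make the main line, is genuinely different and is correct. Your $F$ is exactly the paper's $\cal U=\bigcap_m\ell^1(S,\omega_m)$ with $\omega_m(r)=m^{|r|}$. The route shows that $\cal U$ is universal for bounded sequences: any bounded sequence $(b_k)$ in a complete LMC-algebra $A$ yields a continuous unital homomorphism $\psi:{\cal U}\to A$ with $\psi(X_k)=b_k$. Since $\{X_k\}$ is bounded in $\cal U$, a character unbounded on a bounded set pulls back to a discontinuous character $\phi\circ\psi$ of $\cal U$. What this buys you is a self-contained proof that needs only the Main Theorem for $\cal U$ itself. It bypasses both the general Corollary~\ref{Cor Michael} (which the paper derives from the Main Theorem via the known fact that $\cal U$ is a test case) and the equivalence theorem. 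It also makes transparent why $\cal U$ is the test case for both of Michael's questions. The paper's route is shorter but purely by citation.

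Some tidying is needed:
\begin{enumerate}
\item Step~1 and the hull $C$ and subalgebra $A_0$ are never used and can be deleted.
\item The estimate should read $p(\psi(f))\le\max\{1,p(1)\}\,r_m(f)$ for $m\ge\max\{1,M_p\}$, since a submultiplicative seminorm may have $p(1)>1$ and the constant term contributes $p(1)|c_0|$.
\item State explicitly that $\psi$ is multiplicative: it is so on the dense subalgebra $\cc[X_1,X_2,\dots]$, and multiplication in $A$ is jointly continuous.
\item Note that $\phi\circ\psi(1)=1$, so $\phi\circ\psi$ is a genuine character.
\end{enumerate}

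Finally, the ``main obstacle'' you flag is not one. For each continuous seminorm $p$ the family $(|c_J|\,p(b^J))_J$ is summable. Hence the net of finite partial sums of $\sum_Jc_Jb^J$ is Cauchy and converges by completeness of $A$. No metrizability is needed, and no appeal to \cite{18} or \cite{1} is required at that point.
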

We remark that it is easy to find a non-metrizable, commutative,
complete LMC-algebra that is not functionally continuous
~\cite[Proposition 12.2 and Remark]{29}. That is, there is a
bounded, discontinuous character on this algebra; however, every
continuous character on $C(T)$ is a point evaluation mapping for
some $t \in T$ ~\cite[\S 4]{10}. Surprisingly, below, we give an
example of a commutative, non-metrizable, non-complete LMC-algebra
that is functionally continuous and functionally bounded. We take
this opportunity to correct an obvious typo in ~\cite[Theorem 9.3
(b)]{15} as follows (cf. ~\cite{9} and ~\cite[Theorem 2.7]{20}).
\begin{cor} \label{spectrum_test case} Let
$\lambda\,=\,(\lambda_n)\,\in\,\ell^{1}\,\setminus\,c_{00}$, and
let $g\,=\,\sum_{n=1}^\infty\lambda_nX_n$. The following
statements are equivalent. \begin{enumerate} \item[{\rm (i)}]
There are no non-zero characters on the quotient algebra ${\cal
M}/{\cal I}$, where $\cal M$ is the closed maximal ideal
$\{f\,\in\,{\cal U}\,:\,f(0, 0, \dots)\,=\,0\}$ and ${\cal
I}\,=\,\bigcup_{n\,\in\,\nn}{\cal I}_n$, where ${\cal
I}_n\,=\,X_1{\cal U}\,+\,\cdots\,+X_n{\cal U}$, is a prime ideal
in $\cal U$, which is dense in $\cal M$ and distinct from $\cal
M$. \item[{\rm (ii)}] There are no characters on the quotient
algebra ${\cal U}/{\cal I}+(g - 1){\cal U}$. \item[{\rm (iii)}]
All characters on the commutative Fr\'{e}chet algebra $\cal U$ are
continuous.
\end{enumerate}
\end{cor}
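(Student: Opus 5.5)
The natural plan is to prove the cycle (iii) $\Rightarrow$ (ii) $\Rightarrow$ (i) $\Rightarrow$ (iii), using throughout the structure of $\cal U$ as the algebra of entire functions on $\ell^\infty$. We take as known that $\cal I$ is a prime ideal, dense in $\cal M$ and distinct from it; this is the content of ~\cite[Theorem 9.3]{15} and ~\cite[\S 2]{20}. The basic observation is this. A character $\psi$ on $\cal U$ vanishes on $\cal I$ exactly when $\psi(X_n)=0$ for all $n$. Since $\cal I$ is dense in $\cal M$, such a $\psi$ is continuous if and only if it equals the evaluation $\phi_0$ at the origin. For $\phi_0$ we have $\phi_0(g)=0$, because $g\in\overline{\cal I}\subseteq \cal M$: indeed $g$ is the limit of the partial sums $\sum_{n\le N}\lambda_nX_n\in\cal I$, and these converge in every $q_m$ since $\lambda\in\ell^1$.

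For (iii) $\Rightarrow$ (ii), a character on ${\cal U}/({\cal I}+(g-1){\cal U})$ lifts to a character $\psi$ on $\cal U$ with $\psi|_{\cal I}=0$ and $\psi(g)=1$. By (iii) and the Main Theorem, $\psi=\phi_z$ for some $z\in\ell^\infty$. Then $z_n=\psi(X_n)=0$ for all $n$, so $z=0$ and $\psi(g)=0\neq1$, which is a contradiction.

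For (ii) $\Rightarrow$ (i), I would use the standard unitization correspondence. A non-zero character $\chi$ on ${\cal M}/{\cal I}$ extends uniquely to a character $\psi$ on ${\cal U}/{\cal I}$ with $\psi|_{\cal M}=\chi\neq0$, using ${\cal U}=\cc1\oplus\cal M$. Then $\psi\neq\phi_0$, so $\psi$ is a discontinuous character on $\cal U$ vanishing on $\cal I$. The key step is to produce from $\psi$ one with $\psi(g)=1$. The difference $\psi-\phi_0$ kills $\cal I$, and $\psi$ is not identically zero on the closure of $\cal I$. Since $g$ is the canonical ``tail'' element, I would argue as in ~\cite[Theorem 2.7]{20} that $\psi(g)\neq0$ can be arranged by a choice of $\lambda$. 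Rescaling is then needed, and this is the main obstacle: $g$ is fixed in the statement, so I need that any $\psi$ which kills $\cal I$ and is not $\phi_0$ has $\psi(g)\neq0$. If that holds, one applies a composition automorphism of $\cal U$, namely $f\mapsto f\circ T$ with $T$ a diagonal scaling $X_n\mapsto cX_n$, $c\neq 0$. This preserves $\cal I$ and sends $g$ to $cg$, and choosing $c$ gives $\psi(g)=1$, contradicting (ii). As a fallback I would reduce to the case of the version in ~\cite[Theorem 9.3]{15} with the typo corrected, quoting its argument.

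For (i) $\Rightarrow$ (iii), let $\psi$ be a discontinuous character on $\cal U$. Translating by the point $z=(\psi(X_n))_n$ does not work directly, since $z$ may lie outside $\ell^\infty$. I would instead invoke the reduction of ~\cite{9} and ~\cite[Proposition 2.5]{20}. The continuity of $\psi$ is equivalent to $(\psi(X_n))\in\ell^\infty$ together with $\psi$ agreeing with evaluation there, by Remarks B.4. One then uses the automorphisms of $\cal U$ induced by bounded affine maps of $\ell^\infty$ to move $\psi$ to a discontinuous character vanishing on $\cal I$. Its restriction to $\cal M$ is then a non-zero character on ${\cal M}/{\cal I}$, which contradicts (i).
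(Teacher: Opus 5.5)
Your formal implications are fine. (iii)$\Rightarrow$(ii) needs only that a continuous character killing $\mathcal I$ also kills $\overline{\mathcal I}=\mathcal M\ni g$; the Main Theorem is not needed. The implications (i)$\Rightarrow$(ii) and (iii)$\Rightarrow$(i) are equally formal. The gap is that neither of your two non-trivial steps contains a mechanism that actually produces the character you need.

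\textbf{The step (ii)$\Rightarrow$(i).} You need every $\psi\neq\phi_0$ killing $\mathcal I$ to satisfy $\psi(g)\neq 0$. This cannot be proved on the way to (i), because it holds only vacuously. Suppose one such $\psi$ exists. Run the construction below separately on two infinite halves $A,B$ of the support of $\lambda$, with targets $1$ and $-1$. This gives another $\psi'$ killing $\mathcal I$ with $\psi'(g)=0$ but $\psi'(\sum_{n\in A}\lambda_nX_n)=1$. Diagonal scalings $X_n\mapsto cX_n$ cannot help, since they only multiply a value $\psi(g)$ that is already non-zero.

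\textbf{The step (i)$\Rightarrow$(iii).} You correctly note that $z=(\psi(X_n))$ may be unbounded, but then you use exactly the maps that cannot handle this. Composition with a bounded affine map of $\ell^\infty$ (the translation $f\mapsto f(\cdot-z)$) works only when $z\in\ell^\infty$, and Remarks B.4 does not supply the missing reduction. The unbounded case cannot be set aside: pulling back a character that is unbounded on a bounded set $\{a_n\}$ along $X_n\mapsto a_n$ produces precisely such characters on $\mathcal U$.

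\textbf{The missing idea.} Pull $\psi$ back along a substitution endomorphism built from $\ker\psi$ itself.
\begin{enumerate}
\item[1.] Any bounded sequence $(a_n)$ in $\mathcal U$ gives a continuous unital homomorphism $\Theta:\mathcal U\to\mathcal U$, $X^r\mapsto\prod_n a_n^{r_n}$. This holds because $q_m(\prod_n a_n^{r_n})\le C_m^{|r|}$, where $C_m=\sup_n q_m(a_n)$.
\item[2.] Let $\psi$ be discontinuous, so $\ker\psi$ is dense. Let $n_1<n_2<\cdots$ be the indices with $\lambda_n\neq 0$; there are infinitely many, since $\lambda\notin c_{00}$.
\item[3.] Pick $k_j\in\ker\psi$ with $q_j(1-k_j)\le\frac12\min(|\lambda_{n_j}|,|\lambda_{n_{j+1}}|)$. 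Put $k_0=0$, $a_{n_j}=(k_j-k_{j-1})/\lambda_{n_j}$, and $a_n=0$ for all other $n$.
\item[4.] Then every $a_n$ lies in $\ker\psi$, and $q_m(a_{n_j})\le 1$ for $j>m$, so $(a_n)$ is bounded.
\item[5.] By continuity of $\Theta$, we get $\Theta(g)=\lim_J k_J=1$. Hence $\psi\circ\Theta$ is a character on $\mathcal U/(\mathcal I+(g-1)\mathcal U)$.
\end{enumerate}
This proves (ii)$\Rightarrow$(iii) for every discontinuous $\psi$, whether or not $z\in\ell^\infty$. It also gives (ii)$\Rightarrow$(i) and your (i)$\Rightarrow$(iii). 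For the former, note that a $\psi\neq\phi_0$ vanishing on the dense subset $\mathcal I$ of $\mathcal M$ is discontinuous. The cleanest arrangement is the cycle (i)$\Rightarrow$(ii)$\Rightarrow$(iii)$\Rightarrow$(i), in which only the middle step is non-trivial.

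The paper gives no argument of its own here. It presents the corollary as the corrected form of [15, Theorem 9.3(b)], with pointers to [9] and [20, Theorem 2.7]. So your fallback of quoting that result matches what the paper does, but the self-contained arguments you sketch for the hard directions do not go through as written.
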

\noindent {\it Remarks D.} 1. Esterle introduced notions of
Picard-Borel ideal and Picard-Borel algebra in ~\cite{20}. By
Corollary \ref{spectrum_test case}, it is clear that there does
not exist a maximal Picard-Borel ideal of $\mathcal V\,=\,{\cal
U}/{\cal I}$ distinct from ${\cal M}/{\cal I}$ of codimension $1$.
So all maximal Picard-Borel ideals are of infinite codimension.

\noindent 2. In view of ~\cite[Problem 6.5]{20}, the quotient
algebra $\cal V$ certainly does not possess the finite extension
property (see Definition 6.1, p. 576), if we show that the
converse of ~\cite[Corollary 3.4]{17} holds true (see comments
after proof of ~\cite[Theorem 6.6]{20}). However, we, here, take
an alternate approach to answer Esterle's question as follows.
Since $\cal I$ is a dense, prime maximal Picard-Borel ideal of
$\cal U$ of infinite codimension, by ~\cite[Theorem 5.9]{20}, we
have a finite family $(v_1, \dots, v_n)$ of elements of $\cal V$
with the following two properties: (1) $v_i\,=\,\pi_\infty(f_i)$,
where $f_i$ is a recalcitrant element in $\cal M\,\setminus\,{\cal
I}$ for $1\,\leq\,i\,\leq\,n$ and $\pi_\infty$ is the canonical
quotient mapping from $\cal U$ onto $\cal V$; and (2) $\sigma(v_1,
\dots, v_n)\,=\,\cc^n\,\neq\,\{\chi(v_1), \dots,
\chi(v_n)\}_{\chi\,\in\,M({\mathcal V})}$. We use these facts in

\noindent {\bf Proposition.} {\it The algebra $\cal V$ does not
possess the finite extension property.}

\noindent{\it Proof.} First, we remark that the algebra $\cal V$
has neither been topologized by Clayton in ~\cite{9} nor by
Esterle in ~\cite{20}. However, ${\cal V} = \bigcup_{n \geq
1}{\cal V}_{n}$ is, indeed, an ${\mathcal LF}-$algebra since it is
a strict inductive limit of an increasing sequence $({\cal V}_{n}
= {\cal U}/{\cal I}_n)$ of commutative unital Fr\'{e}chet algebras
as follows. $\cal V$ has the usual inductive topology w.r.t. the
sequence $({\cal V}_{n},\,\tau_n,\,\textrm{in}_n)$, inducing the
finest locally $m-$convex topology $\tau$ for which $\textrm{in}_n
: {\cal V}_n \rightarrow {\cal V}$ is continuous for each $n \in
\nn$ (for details on ${\mathcal LF}-$algebras and the inductive
topology, see ~\cite[\S 15]{29}). Note that ${\cal V} =
\lim\limits_{\longrightarrow}({\cal V}_{n},\,r_n)$, where, for
each $n \in \nn$, $r_n : {\cal V}_{n} \rightarrow {\cal V}_{n +
1}$ is the connecting mapping obtained by the restriction, and
since each ${\cal V}_n$ is semi-simple by ~\cite[Corollary
2.18]{20}, $\tau_n$ and $\tau_{n + 1}$ are equivalent on ${\cal
V}_{n}$. Thus $\cal V$ is a complete LMC-algebra, and by
~\cite[Theorem 2.2]{21}, $\cal V$ satisfies the Gel'fand-Mazur
theorem. By ~\cite[Proposition 15.9]{29} and ~\cite[Corollary
2.6]{20}, the maximal ideal space $M({\cal V})$ is $(\bigcup_{n
\geq 1}\ell^\infty_n) \bigcup \{\pi_0\}$, where $\pi_0$ is the
trivial character of $\cal V$ pertaining to the point evaluation
at zero. Interestingly, by ~\cite[Corollaries 10 and 11]{9} and
Corollary 3.4 above, $M({\cal V})$ reduces to $\{\pi_0\}$ (see,
also, ~\cite[\S 5, p. 570]{20}). So, by an explanation on p. 576
of ~\cite{20} and ~\cite[Corollary 11]{9}, $\cal V$ has the finite
extension property if and only if $\sigma(M) = \{\pi_{0}|_{M}\}\;
(M \subset {\cal V},\, M \neq \emptyset)$. However, as discussed
above, there is a finite family $(v_1, \dots, v_n)$ of elements of
$\cal V$ such that $\sigma(v_1, \dots, v_n) = \cc^n \neq
\{\pi_0(v_1), \dots, \pi_0(v_n)\}$. This shows that $\cal V$ does
not possess the finite extension property (it already does not
possess the countable extension property by ~\cite[Proposition
6.3]{20}). $\hfill \Box$

Esterle pointed out in abstract of ~\cite{20} new algebraic
obstructions to the construction of discontinuous characters on
$\cal U$ related to the Picard theorem, and relate to extension
properties of joint spectra of finite families of $\cal V$ a
question about iteration of Bieberbach mappings raised in
~\cite{17} (see ~\cite[Theorem 6.6]{20}). So he found some
difficulties while looking for a negative answer to Michael's
problem (see a paragraph preceding to ~\cite[\S 7]{20}). This was
also one of the starting points for author to work on positive
direction(s).

\noindent 3. In view of ~\cite[Corollary 2.20]{20} and Corollary
3.4 above, in presence of the continuum hypothesis (CH), the
quotient algebras $\cal V$ and ${\cal V}(g)\,=\,{\cal U}/{\cal
I}+(g - 1){\cal U}$ are not normable. It would be interesting to
check whether there exist models of set theory, including the
axiom of choice (AC), for which the algebra $\cal V$ is not
normable. However, one believes that there may exist models of set
theory, including the AC, in which $\cal V$ is normable. For this,
we work along the line of the result of Allan in
~\cite[Theorem]{3}, obtained by using the ZFC alone (see 5.
below), and so $\cal U$ is normable, since it is a FrAPS, by
~\cite[Theorem 10.1 (ii)]{15}. Now we use certain facts about the
algebras $\cal U$, ${\cal U}/{\cal I}_m$ (resp., ${\cal U}/{\cal
I}_m + (g - 1){\cal U}$) for $m\,\in\,\nn$ and $\cal V$ (resp.,
${\cal V}(g)$) from ~\cite[Lemma 2.17, Corollary 2.18 and Theorem
2.19]{20} in order to show that the integral domain $\cal V$
(resp., ${\cal V}(g)$) can (algebraically) be embedded in ${\cal
F}$ since ${\cal U}/{\cal I}_n$ is isomorphic to $\cal U$ for each
$n$. Thus, in presence of ZFC alone, the quotient algebras $\cal
V$ and ${\cal V}(g)$ are normable ~\cite[Theorem 2]{4}, and we
have partially answered a question of Esterle from ~\cite[p.
554]{20}.

\noindent 4. By ~\cite{9}, $(\ell^\infty, w^*)$ is not a first
countable (and hence, not metrizable) topological space; in fact,
it is not even a $k$-space, but it is a Lindel\"{o}f and
completely regular space. Hence the algebra $C(\ell^\infty)$ of
continuous functions on $\ell^\infty$ is not a Fr\'{e}chet algebra
by ~\cite[Chapter 3, Theorem]{23}; it is just a commutative,
non-complete LMC algebra ~\cite[Proposition 4.10.20]{12}. All the
characters on $C(\ell^\infty)$ are bounded, since $(\ell^\infty,
w^*)$ is replete ~\cite[Corollary 4.10.23]{12}; not only this, but
$C(\ell^\infty)$ is functionally continuous by ~\cite[Theorem
4.10.24]{12}. However, the subalgebra $\textrm{Hol}(\ell^\infty)$
is a Fr\'{e}chet algebra ~\cite[Proposition 3]{9}, which is also
functionally continuous and functionally bounded by Corollaries
3.2 and 3.3 above. Thus one sees the existence of functionally
continuous, functionally bounded, non-complete, non-metrizable LMC
algebra. The author does not know any such examples in the
literature. The commutative, complete, unital LMC-algebra
$\Upsilon(\ell^\infty)\,=\,O(\ell^\infty)$ of germs of analytic
functions in a neighbourhood of $\ell^\infty$, is another
interesting algebra which is functionally continuous (and hence,
functionally bounded) ~\cite[\S 4]{10}.

\noindent 5. Allan exhibited a discontinuous homomorphism between
two commutative, unital Fr\'{e}chet algebras having certain
properties ~\cite[Theorem 8]{4}. In the construction of
discontinuous homomorphism, he used a continuous homomorphism from
$A$ into $\F$, induced by a natural, continuous higher point
derivation $(d_n)$ of infinite order on the Fr\'{e}chet algebra
$\textrm{Hol}(\cc)$ of entire functions at a continuous character
$d_0$. He used only the ZFC axioms (and not the CH). Similarly,
Read used the ZFC axioms alone to give an inequivalent Fr\'{e}chet
algebra topology on the algebra ${\cal F}_\infty$, in order to
show that the Singer-Wermer conjecture fails in the Fr\'{e}chet
case ~\cite{37}. Thus we, here, establish $NDH_{F,\,\cc}$, where
$F$ is any Fr\'{e}chet algebra (see ~\cite[\S 9]{11} for details).
In ~\cite{15}, Dales, author and Read also used only the ZFC
axioms to establish several important results in automatic
continuity theory, including an affirmative answer to the
Dales-McClure problem, and showing that the test algebra $\cal U$
for Michael's problem is, indeed, a FrAPS in $\F$. We remark that
Michael's problem arises in the ZFC.

\noindent 6. It is clear that the method used in our approach
works only for commutative Fr\'{e}chet algebras. However, the
non-commutative analogues of Michael's problems also exist, and
Dixon and Esterle discussed the test case ${\cal U}_{nc}$ (the
non-commutative analogue of $\cal U$) in ~\cite{17}. In another
paper, we exclusively discuss the second approach, based on the
Read's method of generating other inequivalent Fr\'{e}chet
topology on the test case ${\cal U}_{nc}$, to affirmatively solve
Michael's problems in the non-commutative case. As a welcome
bonus, we also establish some significant, long-standing problems
in automatic continuity theory, including an affirmative answer to
Michael's problem for a more general class of $(F)-$algebras by
the third approach.

\noindent 7. Until now, we know that $M(A)$ is dense in $S(A)$,
where $S(A)$ (resp., $M(A)$) is the space of all (continuous)
characters on $A$ w.r.t. the Gel'fand topology ~\cite[Lemma
10.1.1]{23}. By Corollary 3.2, we have $M(A)\,=\,S(A)$.

\noindent 8. We have throughout assumed that our algebras have an
identity. It is a triviality to show that if a commutative
Fr\'{e}chet algebra has a discontinuous character, then so does
the algebra obtained by adjoining an identity
(see~\cite[Proposition 2.1]{17} or ~\cite[Remarks 4.5 (2)]{10}).
Thus, as far as Michael's problem is concerned, no loss of
generality is involved in our assumption.

\noindent 9. Dales and McClure used the topological version of the
tensor algebra over a Banach space to obtain a somewhat negative
result in automatic continuity theory whereas we use the same
technique to answer Michael's problem affirmatively. Also, along
the same lines, recall our comments, given in Remarks B. 3, on the
use of the Dales-author-Read's method in the proof of
~\cite[Theorem 10.1]{15} as well as in the proof of the main
theorem. Thus our approach confirms the impression that one may
not expect the phenomena to hold in the Fr\'{e}chet algebra theory
that hold in the Banach algebra theory. However, we are lucky to
answer Michael's problem in our favor ~\cite[Conclusion]{37}.
Hence, the situation on Fr\'{e}chet algebras is markedly different
from that on Banach algebras, and that a structure theory for
Fr\'{e}chet algebras behaves in a very distinctive manner from
Banach algebras.
\section{Applications to automatic continuity theory}
\noindent {\bf 4.1. Some remarks in automatic continuity theory.}
As promised, we have affirmatively answered Michael's problem
within automatic continuity theory. In the past, there were some
significant works in this theory, which relate to our past (as
explained in Introduction) and present works in some or the other
way and giving applications within this theory or in the theory of
commutative rings and algebras as follows. Johnson established the
uniqueness of the complete norm for BAPS ~\cite{24}. This result
was extended by Loy to FrAPS satisfying the equicontinuity
condition (E) ~\cite{27} (cf. ~\cite{31, 33}; also, for certain
FrAPS in $\F_\infty$ ~\cite{33}). Thomas showed that the image of
a derivation on a Banach algebra is contained in the radical,
solving the Singer-Wermer conjecture affirmatively ~\cite{40}.
However, Read showed that this conjecture fails in the Fr\'{e}chet
case ~\cite{37} (cf. ~\cite{35}). Vogt gave uncountably many
inequivalent Fr\'{e}chet space topologies to spaces of holomorphic
functions ~\cite{42}. However, all these spaces are semi-simple
Fr\'{e}chet algebras with a unique Fr\'{e}chet topology ~\cite{8,
31, 15}. Loy gave a simple method to construct commutative Banach
algebras with inequivalent complete norms by using the
discontinuity of derivations ~\cite{28} (cf. ~\cite{34}).\\

\noindent {\it Remarks E.} 1. Our first remark is on the two
problems of Michael ~\cite[\S 12]{29}, posed exclusively for
commutative algebras, as follows. Dales posed a question about the
continuity of characters on a commutative locally convex
$(F)$-algebras ~\cite[Question 3 (ii)]{11}. Dixon discussed the
non-equivalence of Michael's two problems for more general
commutative locally convex $(F)$-algebras and commutative,
complete locally convex algebras (the reason is that he could not
extend ~\cite[Theorem 1]{18} in this case). Also, he showed that
there is a commutative, complete locally convex algebra, with
jointly continuous multiplication and with an unbounded character
~\cite[\S 4, Questions (3) and (4), and Theorem 4.2]{16}. Thus,
every commutative, complete LMC-algebra is functionally bounded by
Corollary 3.3, but this is not true for every commutative,
complete locally convex algebra.

\noindent 2. Carpenter ~\cite{8} showed that every commutative
semi-simple Fr\'{e}chet algebra $A$ admits a unique Fr\'{e}chet
topology. The proof was divided into four parts, but if characters
were continuous on $A$, then the proof could be derived from the
third part only (see Corollary 4.2 below).\\

\noindent {\bf 4.2. Another equivalent form of Michael's problem.}
First, we see the following easy extension of the \v{S}ilov's
theorem ~\cite[Theorem 2.3.3]{12}. By ~\cite[Proposition
8.1.2]{23}, $\textrm{rad}\,A = \bigcap_{\psi \in
S(A)}\,\textrm{ker}\,\psi\,=\,\bigcap_{\phi \in
M(A)}\,\textrm{ker}\,\phi$, below.

\noindent {\bf Theorem.} {\it Let $B$ be a Fr\'{e}chet algebra
such that all characters of $B$ are continuous (i.e.,
$S(B)\,=\,M(B)$), let $A$ be a commutative Fr\'{e}chet algebra,
with the space of characters $S(A)$, and let $\theta : B
\rightarrow A$ be a homomorphism. Then ${\cal S}(\theta) \subset
\textrm{rad}\,A$.}

\noindent {\it Proof.} Suppose that $b_n\,\rightarrow\,0$ in $B$
and $\theta(b_n)\,\rightarrow\,a$ in $A$, and take
$\phi\,\in\,S(A)$. Then
$\phi\,\circ\,\theta\,\in\,M(B)\,\bigcup\,\{0\}$, and so
$\phi\,\circ\,\theta$ is continuous. Thus
$$\phi(a)\,=\,\lim_{n\,\rightarrow\,\infty}(\phi\,\circ\,\theta)(b_n)\,=\,0.$$
This shows that
$a\,\in\,\bigcap\{\textrm{ker}\,\phi\,:\,\phi\,\in\,M(A)\}\,=\,\textrm{rad}\,A$.
So ${\cal S}(\theta) \subset \textrm{rad}\,A$. $\hfill \Box$

As a special case, we have the following

\noindent {\bf Corollary.} {\it Let $B,\, A$ and $\theta$ be as in
the above theorem, with $A$ semi-simple. Then $\theta$ is
automatically continuous.}

\noindent {\it Proof.} Since $A$ is semi-simple,
$\textrm{rad}\,A\,=\,\{0\}$, and so is ${\cal S}(\theta)$. Hence,
by the closed graph theorem for Fr\'{e}chet spaces, $\theta$ is
continuous. $\hfill \Box$

It is necessary to include the hypothesis that all characters of
$B$ are continuous in the above corollary because it may be that
there is a discontinuous character $\phi$ on a non-commutative
Fr\'{e}chet algebra $B$, and this would give a discontinuous
homomorphism $b\,\mapsto\,\phi(b)1,\;B\,\rightarrow\,A$. However,
having solved Michael's problem affirmatively in \S 3, below, we
solve another long-standing open problem in the theory of
Fr\'{e}chet algebras, equivalent to Michael's problem ~\cite[p.
143]{11} and ~\cite[p. 134]{31}.
\begin{thm} \label{continuity_ssfa} Let $A$ be a commutative
semi-simple Fr\'{e}chet algebra and let $B$ be a commutative
Fr\'{e}chet algebra. Let $\theta : B \rightarrow A$ be a
homomorphism. Then $\theta$ is automatically continuous.
\end{thm}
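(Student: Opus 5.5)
The plan is to derive this directly from Corollary \ref{Cor Michael}, combined with the Theorem and Corollary of \S 4.2 just proved. The argument has three steps.

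First, since $B$ is a commutative Fr\'{e}chet algebra, Corollary \ref{Cor Michael} shows that every character on $B$ is continuous, that is, $S(B)=M(B)$. If $B$ lacks an identity, we would first pass to the unitization as in Remarks D.8. A discontinuous character on $B$ would extend to a discontinuous character on $B^\#$, so continuity of all characters on $B^\#$ gives the same for $B$. Thus the standing hypothesis of the Theorem in \S 4.2 is satisfied.

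Second, we would verify the separating space argument in the form needed. Take a sequence $b_n\to 0$ in $B$ with $\theta(b_n)\to a$ in $A$, and a character $\phi\in S(A)$. The composition $\phi\circ\theta$ is either zero or a character on $B$. In either case it is continuous by the first step, so $\phi(a)=\lim_n \phi(\theta(b_n))=0$. Hence
$${\cal S}(\theta)\subset\bigcap_{\phi\in S(A)}\ker\phi=\textrm{rad}\,A=\{0\},$$
where the equality with the radical is the cited fact from \cite[Proposition 8.1.2]{23}, and the radical vanishes because $A$ is semi-simple. Note that $\phi$ here need not be continuous on $A$: we only use that $\phi$ is linear and that $\phi\circ\theta$ is continuous. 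It is exactly at this point that functional continuity of $B$ is essential.

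Third, ${\cal S}(\theta)=\{0\}$ means that the graph of $\theta$ is closed in $B\times A$. Since $B$ and $A$ are Fr\'{e}chet spaces, the closed graph theorem (equivalently \cite[Proposition 5.1.2]{12}) gives continuity of $\theta$.

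The main obstacle is entirely the first step, namely the appeal to Corollary \ref{Cor Michael}; everything else is the routine Šilov-type argument already given in \S 4.2. The second step should also record the identification $\textrm{rad}\,A=\bigcap_{\phi\in S(A)}\ker\phi$, which holds for commutative unital Fr\'{e}chet algebras. If $A$ is non-unital, we would apply the same identification in the unitization $A^\#$, where the radical is unchanged.
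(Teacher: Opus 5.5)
Your route is essentially the paper's. The proof of Theorem 4.1 is the same closed-graph argument, phrased with $b_n\to b$ rather than through ${\cal S}(\theta)$. It uses Corollary 3.2 for $B$ implicitly when it writes $\phi\circ\theta\in M(B)$, and then uses semi-simplicity of $A$ to conclude $a=\theta(b)$.

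One step, as you have written it, is wrong. In your second paragraph you take $\phi\in S(A)$ and state that continuity of $\phi$ on $A$ is not needed. But the first equality in $\phi(a)=\lim_n\phi(\theta(b_n))=0$ is exactly continuity of $\phi$ at $a$. Continuity of $\phi\circ\theta$ on $B$ only gives the second equality. If $\phi$ is a discontinuous character, then $\ker\phi$ is dense in $A$. So knowing $\theta(b_n)\to a$ and $\phi(\theta(b_n))\to 0$ tells you nothing about $\phi(a)$, and the inference is invalid. (The Theorem at the start of \S 4.2, which you are following, contains the same slip: it takes $\phi\in S(A)$ but then intersects over $M(A)$.)

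The repair is what the paper's proof of Theorem 4.1 actually does. Let $\phi$ range only over $M(A)$, the continuous characters, and use the identification $\textrm{rad}\,A=\bigcap_{\phi\in M(A)}\ker\phi$ quoted at the start of \S 4.2. Alternatively, apply Corollary 3.2 to $A$ as well to get $S(A)=M(A)$.

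With that change, continuity is needed on both sides: of $\phi$ on $A$, which is free once you restrict to $M(A)$, and of $\phi\circ\theta$ on $B$, which is where Corollary 3.2 enters. The rest of your proposal then coincides with the paper's proof, including the unitization remarks and the closed graph step.
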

\noindent{\it Proof.} First, we assume, w.l.o.g., that $B$ is
unital, because if $B$ is not unital, then we can adjoin the
identity $e$ and we can extend $\theta$ on $B_{e}$ by taking
$\theta(e) = e^{'}$ (if $A$ is not unital, then we can also adjoin
the identity $e^{'}$ to $A$). Obviously, $\theta$ is continuous if
and only if an extension of $\theta$ is continuous.

By the closed graph theorem for Fr\'{e}chet spaces, it suffices to
show that, if $b_n \rightarrow b$ in $B$, and if $\theta(b_n)
\rightarrow a$ in $A$, then $\theta(b) = a$. But for $\phi \in
M(A)$, $\phi \circ \theta \in M(B)$, and so $$\phi(a) =
\lim\phi(a_n) = \lim(\phi \circ \theta)(b_n) = (\phi\,\circ
\theta)(b).$$ Thus, $\phi(a - \theta(b)) = 0\;(\phi \in M(B))$. We
now use the fact that $A$ is both commutative and semi-simple
algebra to deduce that $a = \theta(b)$, as required.$\hfill \Box$

As an application to the above theorem, we can drastically shorten
the proof of Carpenter ~\cite{8}, establishing the uniqueness of
the Fr\'{e}chet topology of a commutative, semi-simple Fr\'{e}chet
algebra in the following
\begin{cor} \label{uniqueness_Carpenter} Every commutative,
semi-simple Fr\'{e}chet algebra admits a unique Fr\'{e}chet
algebra topology.
\end{cor}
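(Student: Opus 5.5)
The plan is the standard identity-map argument, resting on Theorem~\ref{continuity_ssfa}. Let $A$ be a commutative, semi-simple Fr\'{e}chet algebra, and let $\tau_1$ and $\tau_2$ be two Fr\'{e}chet algebra topologies on $A$. I will consider the identity map $\iota : (A,\tau_1) \rightarrow (A,\tau_2)$, which is an algebra isomorphism.

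First, observe that semi-simplicity is a purely algebraic notion. Since $\textrm{rad}\,A$ is the intersection of all maximal (modular) ideals, and by Corollary~\ref{Cor Michael} every character is continuous in either topology, the equality $\textrm{rad}\,A = \bigcap_{\phi \in M(A)}\textrm{ker}\,\phi$ recalled before the Theorem in \S 4.2 holds for both $\tau_1$ and $\tau_2$. So $(A,\tau_2)$ is a commutative semi-simple Fr\'{e}chet algebra and $(A,\tau_1)$ is a commutative Fr\'{e}chet algebra. If $A$ is non-unital, I will first adjoin identities exactly as at the start of the proof of Theorem~\ref{continuity_ssfa}.

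Second, I apply Theorem~\ref{continuity_ssfa} with $B = (A,\tau_1)$, target $(A,\tau_2)$ and $\theta = \iota$, which shows that $\iota$ is continuous. Exchanging the roles of $\tau_1$ and $\tau_2$ shows that $\iota^{-1}$ is continuous. Hence $\tau_1 = \tau_2$. Alternatively, once $\iota$ is known to be continuous, the open mapping theorem for Fr\'{e}chet spaces gives continuity of the inverse directly.

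The only point needing care is the first step: confirming that semi-simplicity of $A$ does not depend on which Fr\'{e}chet topology is used. This must be settled before Theorem~\ref{continuity_ssfa} can be applied with $A$ carrying $\tau_2$. I expect to handle it through the algebraic definition of the radical, together with the automatic continuity of characters from Corollary~\ref{Cor Michael}. With that in place, the rest is a direct application of the preceding theorem.
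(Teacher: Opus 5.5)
Your proposal is correct and is essentially the paper's proof: you apply Theorem~\ref{continuity_ssfa} to the identity map $(A,\tau_1)\rightarrow(A,\tau_2)$ and conclude with the open mapping theorem (or by symmetry). Your extra check that semi-simplicity does not depend on the topology, which the paper leaves implicit, is a sound addition, and it does not even need Corollary~\ref{Cor Michael}: the Jacobson radical is defined algebraically, and in any commutative unital Fr\'{e}chet algebra it coincides with the intersection of the kernels of the continuous characters.
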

\noindent{\it Proof.} Let $A$ be a commutative, semi-simple
Fr\'{e}chet algebra with respect to another Fr\'{e}chet topology
$\sigma$, distinct from the Fr\'{e}chet topology $\tau$. Consider
the identity mapping from $(A, \tau)$ into $(A, \sigma)$.
Evidently it is a continuous homomorphism by Theorem 4.1. By the
open mapping theorem for Fr\'{e}chet spaces, it is a linear,
homeomorphism, and so $\sigma\,=\,\tau$. $\hfill \Box$

\noindent 3. Forster did not know whether every homomorphism
$\theta\,:\,A\,\rightarrow\,B$ is continuous, where $A$ and $B$
are Stein algebras, with $\textrm{dim}M(A)\,=\,\infty$
(~\cite[REMARK]{22}). We settle this problem in the affirmative as
follows. We first see that Theorem 5 of ~\cite{22} holds true in
the following
\begin{cor} \label{Thm_Forster} Let $A$ and $B$ be two Stein
algebras, with $\textrm{dim}M(A)\,=\,\infty$, and $(X_B,\,\cal B)$
a reduced Stein space. Then every homomorphism
$\theta\,:\,A\,\rightarrow\,B$ is continuous.
\end{cor}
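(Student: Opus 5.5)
The plan is to deduce Corollary \ref{Thm_Forster} directly from Theorem \ref{continuity_ssfa}. This means checking that a Stein algebra $A$ and the algebra $B$ of a reduced Stein space satisfy its hypotheses.

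First, the hypotheses on the domain. A Stein algebra $A = {\cal O}(X_A)$ is the algebra of global holomorphic functions on a Stein space. It is a commutative, unital Fr\'{e}chet algebra for the topology of uniform convergence on compact subsets of $X_A$. The seminorms $\sup_{K_m}|f|$, taken over an exhausting sequence of compacta $(K_m)$, are submultiplicative. Completeness follows from the closedness of the sheaf sections under compact convergence (Cartan--Serre). So $B$ in Theorem \ref{continuity_ssfa} may be taken to be our $A$. The hypothesis $\textrm{dim}\,M(A) = \infty$ plays no role in this argument. It only marks the case not already covered by Forster's Theorem 5 in ~\cite{22}. In the finite-dimensional case Forster shows that every character is a point evaluation, whereas here continuity of all characters is supplied by Corollary \ref{Cor Michael}.

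Second, the hypotheses on the target. Since $(X_B, {\cal B})$ is reduced, a global section $f \in B$ with $f(x) = 0$ for every $x \in X_B$ is zero. The point evaluations $\phi_x : f \mapsto f(x)$ are continuous characters on $B$. It follows that $\bigcap_{x \in X_B}\textrm{ker}\,\phi_x = \{0\}$, and hence $\textrm{rad}\,B \subset \bigcap_{\phi \in M(B)} \textrm{ker}\,\phi = \{0\}$. Thus $B$ is a commutative, semi-simple Fr\'{e}chet algebra.

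Third, apply Theorem \ref{continuity_ssfa} with the semi-simple algebra $B$ as target and $A$ as domain. Its proof uses that $\phi \circ \theta$ is a character on $A$ for each $\phi \in M(B)$, and that this character is continuous by Corollary \ref{Cor Michael}. The closed graph argument then shows that the homomorphism $\theta : A \rightarrow B$ is continuous.

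The main point to watch is the semi-simplicity step. Reducedness is exactly what makes nilpotent germs vanish, so that a section vanishing at every point is zero. I would state this carefully: a section of ${\cal B}$ whose values vanish everywhere has germs lying in the nilradical of each stalk, and this nilradical is zero for a reduced space. Beyond that, everything reduces to earlier results, with the genuine depth lying in Corollary \ref{Cor Michael}.
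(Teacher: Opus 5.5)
Your proposal is correct and follows the paper's proof: reducedness of $(X_B,{\cal B})$ gives semi-simplicity of $B$, and Theorem 4.1 (whose proof rests on Corollary 3.2) then gives continuity of $\theta$, with the hypothesis $\textrm{dim}\,M(A)=\infty$ playing no role, as you observe. One minor correction: if $A$ is not reduced, its Fr\'{e}chet topology is the canonical one on sections of the structure sheaf, not that of uniform convergence on compacta (those seminorms vanish on nilpotent sections), but this does not affect the argument.
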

\noindent{\it Proof.} By the definition of a Stein algebra, $B
\cong \textrm{Hol}(X_B, \cal B)$, $(X_B, \cal B)$ a Stein space.
Since $(X_B, \cal B)$ is reduced (i.e., a complex space in the
sense of Serre ~\cite{38}), $B$ is a semi-simple Fr\'{e}chet
algebra. By Theorem 4.1, $\theta$ is continuous.$\hfill \Box$

The above corollary is, indeed, a version of Theorem 4.1 in the
class of Stein algebras. If we replace a purely algebraic
condition on $B$ in the above corollary by $\theta$ being an
epimorphism, then we have the following
\begin{cor} \label{Cor_Forster} Let $A$ and $B$ be two Stein
algebras, with $\textrm{dim}M(A)\,=\,\infty$. Then every
epimorphism $\theta\,:\,A\,\rightarrow\,B$ is continuous.
\end{cor}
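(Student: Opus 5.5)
The plan is to reduce Corollary \ref{Cor_Forster} to Corollary \ref{Thm_Forster}, or more directly to Theorem \ref{continuity_ssfa}. The only hypothesis of Corollary \ref{Thm_Forster} that is missing here is that the Stein space $(X_B,\cal B)$ is reduced, which is exactly what gives semi-simplicity of $B$. So the natural route is to quotient by the radical and use the surjectivity of $\theta$ to control the part of $B$ that Theorem \ref{continuity_ssfa} cannot see.

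First, write $B \cong \textrm{Hol}(X_B,\cal B)$ for a Stein space $(X_B,\cal B)$, and let $N = \textrm{rad}\,B$. By the Theorem of \S 4.2 (the extension of \v{S}ilov's theorem), which applies to $A$ because all characters on the commutative Fr\'{e}chet algebra $A$ are continuous by Corollary \ref{Cor Michael}, we get ${\cal S}(\theta)\subset \textrm{rad}\,B = N$. Next, $N$ is closed, since it is the intersection of the kernels of characters, all of which are continuous by Corollary \ref{Cor Michael}. Hence $B/N$ is a commutative semi-simple Fr\'{e}chet algebra; concretely, it is the algebra of holomorphic functions on the reduction $(X_B)_{\rm red}$, a reduced Stein space. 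Applying Theorem \ref{continuity_ssfa} to $q\circ\theta : A\to B/N$ shows that $q\circ\theta$ is continuous.

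The main obstacle is then to show ${\cal S}(\theta)=\{0\}$, knowing only that ${\cal S}(\theta)\subset N$. Here surjectivity must enter. Because $\theta$ is onto, ${\cal S}(\theta)$ is a closed ideal of $B$ (the remark in the alternative proof of the main theorem, citing \cite[\S 6]{15}). On a Stein space, the ideal $N$ of nilpotent sections is locally nilpotent: on each relatively compact Stein open set, $N^k$ vanishes for some $k$.

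I would first try the standard Banach-style argument for the continuity of epimorphisms onto algebras with nilpotent-type radical, adapted to the Fr\'{e}chet setting. Use the stability lemma for the chain of closed ideals $\overline{N^k{\cal S}(\theta)}$, together with the fact that $\theta$ is continuous modulo $N$ (so $\theta$ maps into $B/N$ continuously), to show inductively that ${\cal S}(\theta)\subset \overline{N^k}$ for each $k$. Localize with the Arens--Michael decomposition $B=\varprojlim B_m$ over an exhaustion of $X_B$ by compact holomorphically convex sets; in each $B_m$ the radical is nilpotent, and this forces the image of ${\cal S}(\theta)$ in every $B_m$ to vanish. Since $B$ embeds into $\prod_m B_m$, this gives ${\cal S}(\theta)=0$.

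Finally, the closed graph theorem for Fr\'{e}chet spaces gives the continuity of $\theta$. The hypothesis $\dim M(A)=\infty$ plays no role beyond matching the setting of Forster's question, since Corollary \ref{Cor Michael} removes the need for it.
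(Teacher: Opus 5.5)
There is a genuine gap. Your reduction to ${\cal S}(\theta)\subset N=\textrm{rad}\,B$ is fine; it is the argument behind Corollary~4.3. The step you flag as the main obstacle, however, is not merely unproved: it cannot be obtained from the ingredients you list.

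Apart from $A$ being a commutative Fr\'echet algebra, the only property of $A$ your argument uses is that its characters are continuous. At that level of generality the conclusion is false. Let $A=\cc 1\oplus\ell^1$ with $\ell^1\cdot\ell^1=\{0\}$, let $B=\cc[\epsilon]/(\epsilon^2)$ (the Stein algebra of a double point), and put $\theta(\alpha 1+x)=\alpha+\lambda(x)\epsilon$, where $\lambda$ is a discontinuous linear functional on $\ell^1$. Then:
\begin{itemize}
\item $\theta$ is an epimorphism;
\item the unique character of $A$ is continuous;
\item $N^2=\{0\}$;
\item yet ${\cal S}(\theta)=N\neq\{0\}$.
\end{itemize}
So neither the stability lemma nor (local) nilpotency of the radical can push ${\cal S}(\theta)$ from $N$ into $\overline{N^2}$. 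That step amounts to automatic continuity of a derivation-type map from $A$ into $N/\overline{N^2}$, which is information about the domain $A$ that you never supply. Likewise, the claim that a nilpotent radical in each $B_m$ forces the image of ${\cal S}(\theta)$ to vanish already fails for this Banach algebra $B$. Surjectivity is not the relevant ingredient either.

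What is missing is the Stein structure of $A$. The paper uses it through Forster's theorems. For each finitely generated (equivalently, closed) maximal ideal $M$ of $B$, $\theta^{-1}(M)$ is the kernel of the character $A\to B/M\cong\cc$. It is therefore closed by Corollary~\ref{Cor Michael}, hence finitely generated, and Forster's Theorem~3 then gives continuity of $\theta$ at once. No radical or separating-space analysis is needed.

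If you want to stay within your separating-space framework, you can close the gap directly:
\begin{enumerate}
\item For $y\in X_B$ with maximal ideal $M_y$, the ideal $\theta^{-1}(M_y)=M_x$ is a closed, finitely generated maximal ideal of $A$.
\item Hence $M_x^k$ is finitely generated, so closed, and of finite codimension.
\item Since $M_x^k\subset\theta^{-1}(M_y^k)$, the latter is closed. So the map $A\to B/M_y^k$ is continuous, and ${\cal S}(\theta)\subset M_y^k$ for all $y$ and $k$.
\item The intersection $\bigcap_{y,k}M_y^k$ is $\{0\}$ by Krull's intersection theorem in the local rings $\mathcal{O}_{X_B,y}$.
\end{enumerate}
This route, like the paper's, only needs $\theta$ unital, not surjective.
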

\noindent{\it Proof.} Let $M$ be an arbitrary finitely generated
maximal ideal of $B$. Then $M$ is closed by ~\cite[Theorem 2]{22}
and $B/M \cong \cc$. Hence $\theta^{-1}(M)$ is also a maximal
ideal of $A$ such that $A/\theta^{-1}(M) \cong \cc$. So,
$\theta^{-1}(M)$ is closed being the kernel of the quotient map
from $A$ onto $A/\theta^{-1}(M)$. Again, by ~\cite[Theorem 2]{22},
$\theta^{-1}(M)$ is finitely generated. Hence, by ~\cite[Theorem
3]{22}, $\theta$ is continuous. $\hfill \Box$

A somewhat more elaborate version of the same idea enables us to
drop the surjectivity condition on $\theta$ in order to {\it
completely} settle the Forster's problem in the following
\begin{cor} \label{Cor_final} Let $A$ and $B$ be two Stein
algebras, with $\textrm{dim}M(A)\,=\,\infty$. Then every
homomorphism $\theta\,:\,A\,\rightarrow\,B$ is continuous.
\end{cor}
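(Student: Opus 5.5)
The plan is to replace the surjectivity used in Corollary \ref{Cor_Forster} by the functional continuity now available from Corollary \ref{Cor Michael}. I will then get continuity of $\theta$ from the closed graph theorem, using a separating-space argument instead of Forster's epimorphism criterion. Throughout, $A\cong\textrm{Hol}(X_A)$ and $B\cong\textrm{Hol}(X_B)$ for Stein spaces $X_A, X_B$. I may take $\theta$ unital: otherwise $\theta(1)$ is an idempotent, $B$ splits accordingly, and the argument runs on the factor $\theta(1)B$.

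\emph{Step 1 (characters and their powers).} Fix $x\in X_B$. Let $M_x=\{g\in B: g(x)=0\}$, a closed, finitely generated maximal ideal by \cite[Theorem 2]{22}. The map $\chi=\phi_x\circ\theta$ is a character on the commutative Fr\'{e}chet algebra $A$, so it is continuous by Corollary \ref{Cor Michael}. Hence $N:=\ker\chi=\theta^{-1}(M_x)$ is a closed maximal ideal of $A$, and it is finitely generated by \cite[Theorem 2]{22} again. For each $k\in\nn$, the ideal $N^k$ is then finitely generated. It therefore has finite codimension, and by Forster's closedness of finitely generated ideals in Stein algebras it is closed.

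\emph{Step 2 (jets are continuous).} Let $\pi_{x,k}:B\rightarrow B/M_x^k$ be the quotient map onto the finite-dimensional algebra of $(k-1)$-jets at $x$. Its kernel contains $\theta(N)^k\subset M_x^k$, so $I_{x,k}:=\ker(\pi_{x,k}\circ\theta)=\theta^{-1}(M_x^k)\supset N^k$. Since $N^k$ is closed of finite codimension, the ideal $I_{x,k}$ is a finite union of cosets of $N^k$, hence closed of finite codimension. A linear map into a finite-dimensional space with closed kernel of finite codimension is continuous. Thus $\pi_{x,k}\circ\theta$ is continuous for all $x$ and $k$.

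\emph{Step 3 (separating space).} Let $b\in{\cal S}(\theta)$, so there are $a_n\rightarrow 0$ in $A$ with $\theta(a_n)\rightarrow b$ in $B$. The quotient maps $\pi_{x,k}$ are continuous because $M_x^k$ is closed, being finitely generated. Step 2 then gives $\pi_{x,k}(b)=\lim\pi_{x,k}\theta(a_n)=0$. Hence
$${\cal S}(\theta)\subset\bigcap_{x\in X_B}\bigcap_{k\in\nn}M_x^k .$$
By Krull's intersection theorem in the noetherian local rings ${\cal O}_{X_B,x}$, together with the identification of $B$ with global sections of ${\cal B}$, an element of $B$ with vanishing germs at every point is zero. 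So this intersection is $\{0\}$, ${\cal S}(\theta)=\{0\}$, and $\theta$ is continuous by the closed graph theorem \cite[Proposition 5.1.2]{12}.

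The main obstacle is Step 3 in the non-reduced case. There $\textrm{rad}\,B\neq\{0\}$, so Theorem \ref{continuity_ssfa} is unavailable, and one must check carefully that the intersection of the powers $M_x^k$ over all points really vanishes. This amounts to verifying that the $M_x^k$ computed in $B$ restrict to $\mathfrak{m}_x^k$ in ${\cal O}_{X_B,x}$, which I would take from Forster's description of closed ideals via Cartan's Theorem B. The hypothesis $\textrm{dim}\,M(A)=\infty$ plays no role beyond placing us in the case Forster left open.
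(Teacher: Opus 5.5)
Your proof is correct, but it finishes by a different route from the paper.

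\textbf{What is shared.} The paper, like your Step 1, shows that $\theta^{-1}(M)$ is a closed, hence finitely generated, maximal ideal of $A$ for every finitely generated maximal ideal $M$ of $B$. Its stated reason for closedness (``being the kernel of the quotient map'') only works because the character is continuous. So the paper silently relies on Corollary \ref{Cor Michael}, which you invoke explicitly.

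\textbf{Where you differ.} The paper then just cites Forster's continuity criterion \cite[Theorem 3]{22}. You bypass that theorem:
\begin{enumerate}
\item[(a)] you pass to the finite-dimensional quotients $A/N^k$ and $B/M_x^k$;
\item[(b)] you get continuity of every $\pi_{x,k}\circ\theta$;
\item[(c)] you kill ${\cal S}(\theta)$ with Krull's intersection theorem and conclude by the closed graph theorem.
\end{enumerate}

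\textbf{What each buys.} The paper's route is a single citation once Forster's theorem is granted. Yours is self-contained, apart from standard Stein-space facts: finitely generated ideals are closed, and closed maximal ideals are finitely generated. It also works verbatim for non-reduced $B$, and it makes clear that the hypothesis $\dim M(A)=\infty$ plays no role.

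\textbf{Two small corrections.}
\begin{enumerate}
\item[(1)] $I_{x,k}$ is not a finite union of cosets of $N^k$: a nonzero complex subspace of $A/N^k$ has uncountably many elements. The right reason is that $\pi_{x,k}\circ\theta$ vanishes on $N^k$. It therefore factors through the finite-dimensional Hausdorff space $A/N^k$, on which every linear map is continuous.
\item[(2)] The ``main obstacle'' you flag in Step 3 is not one. You only need the trivial inclusion: since $M_x$ maps into $\mathfrak{m}_x$, every element of $M_x^k$ has its germ at $x$ in $\mathfrak{m}_x^k$. The equality of the two is never used, so Theorem B is not needed at that point.
\end{enumerate}
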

\noindent{\it Proof.} Suppose that $\theta$ is a homomorphism.
Following the proof of Corollary 4.4, $\theta^{-1}(M)$ is just an
{\it ideal} of $A$ such that $A/\theta^{-1}(M) \cong \cc$. So,
$\theta^{-1}(M)$ is closed being the kernel of the quotient map
from $A$ onto $A/\theta^{-1}(M)$. Since the ideal $\theta^{-1}(M)$
satisfies the hypothesis of Theorem 4 of ~\cite{22}, it is also
finitely generated. Thus, for every finitely generated maximal
ideal $M$ in $B$, the ideal $\theta^{-1}(M)$ is finitely generated
in $A$. By ~\cite[Theorem 3]{22}, $\theta$ is continuous. $\hfill
\Box$

We remark that we have exposed the elementary nature of Forster's
theorems (~\cite[Theorems 3 and 4]{22}) by using some elementary
facts from abstract algebra and functional analysis. We also
remark that Theorem 4.1 above does not hold true for {\it all}
Fr\'{e}chet algebras as happened with the class of Stein algebras
in Corollary 4.5 (e.g., ~\cite[Theorem 11.2]{15}, Theorem 1.1
above, and the algebra $\F_\infty$, with countably many
inequivalent Fr\'{e}chet topologies ~\cite[pp. 125-126]{35}, and
so, the identity map is not a topological isomorphism).

\noindent 4. In ~\cite[Theorems 4.1]{31, 33}, we can now drop the
condition ``the range of $\theta$ is not one-dimensional", because
there are no discontinuous characters on the domain algebra $B$
which would have given the discontinuous homomorphism
$b\,\mapsto\,\phi(b)1, \;B\,\rightarrow\,A$.\\

\noindent {\bf 4.3. Completing the circle of ideas.} In
~\cite[Theorem 12.3]{15}, it was shown that a Banach algebra
$\ell^1(S)\,\cong\,\widehat{\bigvee}_{\{1\}}E$, is such that
$\cc[X_1,\,X_2] \subset \ell^1(S) \subset \F_{2}$, but the
embedding $(\ell^1(S),\,\|\cdot\|)\rightarrow (\F_{2},\,\tau_c)$
is not continuous. We did not know whether there is a non-Banach
Fr\'{e}chet algebra with these properties. We show that the test
case $\cal U$ is such an example in the following
\begin{thm} \label{Thm 12.3_DPR} The test case $({\cal U},
(q_m))$ for Michael's problem is such that $\cc[X_1, X_2] \subset
{\cal U} \subset \F_{2}$, with the embedding $({\cal U}, (q_m))
\rightarrow (\F_{2}, \tau_c)$ discontinuous.
\end{thm}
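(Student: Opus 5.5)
The plan is to obtain the embedding by restricting the map constructed in \cite[Theorem 12.3]{15} for the Banach algebra ${\cal U}_1 = \ell^1(S) \cong \widehat{\bigvee}_{\{1\}}E$ to the subalgebra ${\cal U}$. By (\ref{inclusion}) we have ${\cal U} \hookrightarrow {\cal U}_m \hookrightarrow {\cal U}_1$ continuously. In \cite[Theorem 12.3]{15} an injective homomorphism $\Theta : \ell^1(S) \rightarrow \F_2$ is built whose image contains $\cc[X_1, X_2]$, and whose discontinuity w.r.t. $(\F_2, \tau_c)$ comes from a discontinuous linear functional $\lambda$ on $E = \ell^1 = \hat{\bigvee}^1E$. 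Roughly, the coefficient of a fixed monomial in $\Theta(f)$ is governed by $\lambda \circ P_1$, in the manner of the non-natural higher point derivation $d_n = \lambda_n \circ P_n$ at $d_0 = P_0$ recalled in \S 3.1.

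I would set $\theta := \Theta|_{\cal U}$. It is an injective homomorphism because $\Theta$ is, and it is well defined since ${\cal U} \subset {\cal U}_1$. Moreover ${\cal U}$ contains all polynomials in $X_1, X_2, \dots$, in particular $\cc[X_1,X_2]$. Identifying ${\cal U}$ with $\theta({\cal U})$ then gives the chain $\cc[X_1, X_2] \subset {\cal U} \subset \F_2$. Here I must make sure that $\Theta$ is normalized so that $\Theta(X_1), \Theta(X_2)$ generate $\cc[X_1,X_2]$ (or are $X_1, X_2$ themselves), exactly as in \cite[Theorem 12.3]{15}. If that normalization is only available up to an automorphism of $\F_2$, I would compose with that automorphism, which is $\tau_c$-continuous.

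For discontinuity, the key observation is that on the homogeneous subspace of degree one the norms of ${\cal U}$ collapse to a single one. For $u \in \hat{\bigvee}^1E = \ell^1$ we have $q_m(u) = \omega_m(1)\|u\|_{\ell^1} = m\|u\|_{\ell^1}$. Hence the restriction of the Fr\'{e}chet topology of ${\cal U}$ to $\textrm{span}\{X_i : i \in \nn\}$ is just the $\ell^1$-norm topology. Since $\lambda$ is discontinuous on $\ell^1$, I can pick finitely supported $u_k = \sum_i \alpha^{(k)}_i X_i$ with $\|u_k\|_{\ell^1} \rightarrow 0$ but $|\lambda(u_k)| \geq 1$. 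Then $q_m(u_k) = m\|u_k\|_{\ell^1} \rightarrow 0$ for every $m$, so $u_k \rightarrow 0$ in $({\cal U},(q_m))$. On the other hand, the fixed coordinate projection $\pi_J$ on $\F_2$ that records $\lambda \circ P_1$ satisfies $|\pi_J(\theta(u_k))| = |\lambda(u_k)| \geq 1$. Since $\pi_J$ is $\tau_c$-continuous, $\theta(u_k) \not\rightarrow 0$, and the embedding is discontinuous.

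The main obstacle is the first step. I have to check that the map of \cite[Theorem 12.3]{15} really has a fixed coordinate functional on $\F_2$ which restricts to $\lambda \circ P_1$ on degree-one elements, and that its injectivity argument (the $P$, $Q$ estimates of Lemma \ref{lem_10.2} type, with $q_1 = \|\cdot\|_{\ell^1(S)}$) goes through verbatim on ${\cal U}$. I expect this to be automatic, since $\theta$ is a restriction of an injective map and the estimates only use $q_1(f) < \infty$. If instead the construction there does not isolate $\lambda$ in one fixed coefficient, I would first try to redefine $\Theta$ on generators as $X_i \mapsto \lambda(e_i)X_1 + g_i$, with $g_i$ of rapidly increasing order in $X_2$ chosen as in (\ref{10.5}) to secure injectivity, and then verify convergence of the images coefficientwise.
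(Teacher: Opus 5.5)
Restricting the embedding of \cite[Theorem 12.3]{15} to ${\cal U}$ is a viable strategy, but your discontinuity step fails as written. You cannot infer \emph{finitely supported} $u_k=\sum_i\alpha^{(k)}_iX_i$ with $\|u_k\|_{\ell^1}\to0$ and $|\lambda(u_k)|\geq1$ from the discontinuity of $\lambda$ on $\ell^1$. A discontinuous functional on $\ell^1$ can vanish on $c_{00}$, and the one in the construction you are restricting does: it is chosen with $\lambda(X_i)=0$ for every $i$ and $\lambda(\sum_{i\geq2}X_i/i^2)=1$, so that $\psi(X_1)=X$ and $\psi(\sum_{i\geq2}X_i/i^2)=X^2f+Y$ with $f\in\F$. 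For that $\lambda$, the extension $\Psi'$ of $\psi=\theta+\lambda Y$ agrees on $\cc[X_1,X_2,\dots]$ with the continuous map $\theta$. So does $\chi\circ\Psi'$, since $\chi$ fixes $\F$. Hence no sequence of polynomials in the $X_i$ can ever witness discontinuity. Your fallback of prescribing $X_i\mapsto\lambda(e_i)X_1+g_i$ on generators rests on the same misconception. If $(\lambda(e_i))$ is bounded, the degree-one coefficient $\sum_i\alpha_i\lambda(e_i)$ is a continuous functional on $\ell^1$; if it is unbounded, this series diverges for some $\alpha\in\ell^1$. The discontinuity must come from a Hamel-basis functional evaluated on genuinely infinite sums, which is why an extension result such as \cite[Theorem 12.4]{15} (Theorem~4.7 here) is needed to obtain a homomorphism at all.

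The repair is immediate from your own correct observation that $q_m=m\|\cdot\|_{\ell^1}$ on degree one. It shows that ${\cal U}^{(1)}$ is all of $\ell^1$ with its norm topology, not merely $\textrm{span}\{X_i\}$. So take $u_k\in{\cal U}^{(1)}$ with $\|u_k\|_{\ell^1}\to0$ and $\lambda(u_k)=1$, for instance $u_k=\sum_{i\geq k}X_i/i^2$. Then $q_m(u_k)=m\sum_{i\geq k}i^{-2}\to0$ for every $m$. The fixed coordinate is the $Y$-coefficient. Since $\chi(X)=X$, $\chi$ fixes $\F$ and $\chi(Y)=Y-X^2f$. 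Hence $\chi\Psi'(u)=\theta(u)+\lambda(u)(Y-X^2f)$ for $u\in{\cal U}^{(1)}$, and the $Y$-coefficient of $\chi\Psi'(u_k)$ is $1$.

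The same remark repairs your range argument, where you conflate the generators $X_i$ of ${\cal U}$ with the indeterminates of $\F_2=\cc[[X,Y]]$. What is needed is $\cc[X,Y]\subset\chi\Psi'({\cal U})$, i.e.\ that the preimages $X_1$ of $X$ and $\sum_{i\geq2}X_i/i^2$ of $Y$ lie in ${\cal U}$. They do, and no normalization of $\Theta(X_2)$ or further automorphism is involved.

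With these corrections your route is valid and shorter than the paper's. The paper rebuilds the construction on ${\cal U}$ itself: the embedding $\theta:{\cal U}\to\F$ of \cite[Theorem 10.1 (ii)]{15}, a discontinuous $\lambda$ on ${\cal U}^{(1)}$, extension by Theorem~4.7, then $\chi$. Restriction from $\ell^1(S)$ needs neither Theorem~4.7 nor \cite[Theorem 10.1 (ii)]{15}. It only needs the degree-one computation guaranteeing that discontinuity survives passage to the finer topology of ${\cal U}$.
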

\noindent{\it Proof.} The proof is the same as that of
~\cite[Theorem 12.3]{15}. So we write $\F_2$ as $\cc[[X, Y]]$ and
reserve $X_i$ for elements of $\cal U$. We regard
$\F\,=\,\cc[[X]]$ as a subalgebra of $\cc[[X, Y]]$; the obvious
quotient map from $\cc[[X, Y]]$ obtained by setting $Y\,=\,0$ is
denoted by
$$\pi\,:\,\sum_{i, j = 0}^\infty \alpha_{i, j}X^iY^j\,\mapsto\,\sum_{i = 0}^\infty\alpha_{i, 0}X^i, \;\;\F_2\,\rightarrow\,\F.$$
By ~\cite[Theorem 10.1 (ii)]{15}, there is a continuous, unital
embedding $\theta\,:\,{\cal U}\,\rightarrow\,\F$ such that
$\cc[X]\,\subset\,\theta({\cal U})$. We set
$f_i\,=\,\theta(X_i)\;(i\,\in\,\nn)$, and as in ~\cite[Theorem
10.1]{15}, we may suppose that $f_1\,=\,X\,\in\,\F$. We denote by
${\cal U}^{(1)}$ the closed linear subspace of $\cal U$ spanned by
the elements $X_i$ for $i\,\in\,\nn$, so that
\begin{equation} \label{U1} {\cal U}^{(1)}=\bigcap_{m
\in \nn}{\cal
U}_{m}^{(1)}=\{\sum_{i=1}^\infty\alpha_iX_i:\sum_{i=1}^\infty|\alpha_i|m
< \infty,\;\textrm{for all}\,m \in \nn\},\;{\cal U}_{1}^{(1)}
\cong \ell^1(\zz^+),
\end{equation}
and so this non-Banach Fr\'{e}chet space is not isometrically
isomorphic to $\ell^1(\zz^+)$. But we can still choose a non-zero,
discontinuous linear functional $\lambda$ on ${\cal U}^{(1)}$ (as
the element $\sum_{i=2}^\infty\frac{X_i}{i^2}\,\in\,{\cal
U}^{(1)}$), and then define a linear map
\begin{equation} \label{linear-psi}
\psi\,:\,{\cal
U}^{(1)}\,\rightarrow\,\F_2,\;u\,\mapsto\,\theta(u)\,+\,\lambda(u)Y.
\end{equation}

Our main {\it claim} is that $\psi$ can be extended to a
homomorphism $\Psi{'}\,:\,{\cal U}\,\rightarrow\,\F_2$ such that
$\pi\,\circ\,\Psi{'}\,=\,\theta$. To establish this claim, we
shall require the following slightly more general theorem, in
which we maintain the above notations. Further, we write ${\cal
M}_2$ for the unique maximal ideal of $\F_2$. We omit the proof of
the theorem by recalling that ${\cal U}^{(1)}$ is a subspace of
${\cal U}_1^{(1)}\,=\,\ell^1(\zz^+)$, both spanned by $X_i$ for
$i\,\in\,\nn$, and $\cal U$ is a subalgebra of ${\cal
U}_1\,=\,\ell^1(S)$, so the proof of ~\cite[Theorem 12.4]{15}
works in our case as well.
\begin{thm} \label{Thm 12.4_DPR} Let $\beta\,:\,{\cal
U}^{(1)}\,\rightarrow\,{\cal M}_2$ be a linear map such that
$\pi\,\circ\,\beta\,:\,{\cal U}^{(1)}\,\rightarrow\,\F$ is
continuous. Then there is a unital homomorphism
$\bar{\beta}\,:\,{\cal U}\,\rightarrow\,\F_2$, extending $\beta$,
such that $\pi\,\circ\,\bar{\beta}\,:\,{\cal U}\,\rightarrow\,\F$
is continuous.
\end{thm}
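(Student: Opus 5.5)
The construction is purely algebraic: $\mathcal U$ sits inside the free commutative algebra $\F_\infty$ on the generators $X_i$, and $\F_2$ is complete. I would therefore first extend $\beta$ to a unital homomorphism on polynomials, and then push it to all of $\mathcal U$ via power series. The steps, in order, are as follows.

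\emph{Step 1: polynomials.} Set $\beta_i = \beta(X_i)\in{\cal M}_2$ for $i\in\nn$. On $\cc[X_1,X_2,\dots]$, the map
\[
\bar\beta(X_{t_1}\cdots X_{t_m})=\beta_{t_1}\cdots\beta_{t_m}, \qquad \bar\beta(1)=1,
\]
is a unital homomorphism. It agrees with $\beta$ on finite linear combinations of the $X_i$.

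\emph{Step 2: extension to $\mathcal U$.} Write $f\in\mathcal U$ as $f=\sum_{s\in S}\beta_sX^s$ with $\sum_s|\beta_s|<\infty$; this holds since $q_1=\|\cdot\|_{\ell^1(S)}$. Define
\[
\bar\beta(f)=\sum_s\beta_s\,\beta_{s_1}\cdots\beta_{s_n}\in\F_2 .
\]
Because every $\beta_i\in{\cal M}_2$, the product $\beta^s:=\beta_{s_1}\cdots\beta_{s_n}$ has order at least $|s|$. Hence for each coefficient $\pi_{(p,q)}$ only the finitely many lengths $|s|\le p+q$ contribute. However, within a fixed length there are infinitely many $s$. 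So I must show that $\sum_{|s|=n}|\beta_s|\,|\pi_{(p,q)}(\beta^s)|<\infty$. This requires a bound on the coefficients of the $\beta_i$ uniform in $i$. I would obtain it from the hypothesis in two places.
\begin{itemize}
\item For the $Y$-free part, $\pi\circ\beta$ is continuous from ${\cal U}^{(1)}$ into $\F$, so each coefficient functional of $\pi\circ\beta$ is bounded by $C\,q_m$ on ${\cal U}^{(1)}$ for some $m$. Evaluated at $X_i$, this gives $|\pi_k(\pi\beta_i)|\le C m$ uniformly in $i$, since $q_m(X_i)=m$.
\item For the parts involving $Y$ there is no such control, because $\beta$ itself need not be continuous. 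This is the \emph{main obstacle}. The analogue of \cite[Theorem 12.4]{15} circumvents it, and I would follow that argument: decompose $\beta_i=\pi(\beta_i)+Y h_i$, and expand $\beta^s$ in powers of $Y$. For a fixed monomial $X^pY^q$, only terms with at most $q$ factors taken from the $Yh_i$ survive.
\end{itemize}
If absolute summability fails for the $Y$-parts in general, I would instead define $\bar\beta$ through a Hamel-type splitting, as in \cite{15}. Concretely, I would fix $\bar\beta$ on the $Y$-free component by continuity, since $\pi\circ\bar\beta$ must equal the continuous homomorphism extending $\pi\circ\beta$. I would then handle the $Y$-components coefficient by coefficient, using the freedom of choice on a complement of the polynomials; this is where the ``$\lambda$ arbitrary'' phenomenon of (\ref{linear-psi}) enters.

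\emph{Step 3: properties.} Multiplicativity follows from Step 1 together with the coefficientwise convergence of the defining series, since multiplication in $\F_2$ is separately continuous in $\tau_c$. Finally, $\pi\circ\bar\beta$ is the homomorphism $f\mapsto\sum_s\beta_s\,\pi(\beta)^s$. Its coefficient functionals are dominated by $C'q_{m'}$, using the uniform bounds from Step 2 together with the weights $m^{|s|}$ in $q_m$. This makes $\pi\circ\bar\beta$ continuous from $(\mathcal U,(q_m))$ into $(\F,\tau_c)$, as required.
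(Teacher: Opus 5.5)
There is a genuine gap. It sits where you place the ``main obstacle'', but it is not a summability problem.

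Restricted to ${\cal U}^{(1)}$, your Step 2 formula reads $\bar\beta\bigl(\sum_i\alpha_iX_i\bigr)=\sum_i\alpha_i\beta(X_i)$, so it depends only on the values $\beta(X_i)$. Suppose it converged and agreed with $\beta$ on all of ${\cal U}^{(1)}$. Then each coefficient functional $\pi_{(p,q)}\circ\beta$ would be the pointwise limit of the continuous partial sums $u\mapsto\sum_{i\le N}\alpha_i\,\pi_{(p,q)}(\beta(X_i))$. By Banach--Steinhaus it would be continuous, so $\beta$ itself would be continuous into $(\F_2,\tau_c)$.

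The theorem only matters when the $Y$-coefficients of $\beta$ are discontinuous. In Theorem 4.6, $\beta=\psi=\theta+\lambda Y$ with $\lambda$ discontinuous. If $\lambda$ vanishes on every $X_i$, your $\bar\beta$ is just $\theta$ on ${\cal U}^{(1)}$ and loses the $Y$-term. If $(\lambda(X_i))$ is unbounded, the series does not even converge. So no uniform bound on the $\beta(X_i)$ can rescue Step 2. Your Step 3 estimate is fine, but only for the $Y$-free part, which is indeed forced to be the continuous extension of $\pi\circ\beta$.

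The fallback does not close the gap, because the values of $\bar\beta$ off the polynomials are not free. Multiplicativity forces $\bar\beta(fg)=\bar\beta(f)\bar\beta(g)$ for all $f,g$ in the \emph{completed} homogeneous components of $\mathcal U$. In particular $\bar\beta(uv)=\beta(u)\beta(v)$ for arbitrary $u,v\in{\cal U}^{(1)}$. These completed components are far larger than the algebraic tensor powers. A Hamel basis of a complement of the polynomials carries no multiplicative structure, so choosing values ``coefficient by coefficient'' gives no reason for these constraints to be consistent.

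What is needed is the following. Write $\beta_{(i,j)}:=\pi_{(i,j)}\circ\beta$. For each degree $n$ and each $(p,q)$, you need a linear functional on the whole $n$-th homogeneous component of $\mathcal U$ with two properties:
\begin{itemize}
\item it extends the finite sum $\sum\beta_{(i_1,j_1)}\otimes\cdots\otimes\beta_{(i_n,j_n)}$, taken over $(i_k,j_k)\neq(0,0)$ with $\sum i_k=p$ and $\sum j_k=q$, from the algebraic tensor power;
\item it is compatible with products.
\end{itemize}

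This is the device of \cite[Theorem 12.4]{15}, to which the paper defers; compare its sketch in the proof of Corollary 4.9. One extends the $\beta_{(i,j)}$ to $\F_\infty^{(1)}$ and evaluates Read's ``tensor product by rows'' \cite[Definition 1.9]{37} on $\epsilon_n(f^{(n)})$. That construction makes sense for discontinuous functionals and reproduces $\beta(u_1)\cdots\beta(u_n)$ on products, and one checks multiplicativity as in \cite{15}. Only the continuous $\beta_{(i,0)}$ enter the $Y$-free coefficients, and for those the tensor product by rows coincides with the ordinary one. This is what gives the continuity of $\pi\circ\bar\beta$.
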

In fact, a slight digression of the proof of Theorem 4.7 (which is
analogous to the proof of ~\cite[Theorem 12.4]{15}, and a similar
proof is discussed below) enables us to give the following
``natural" variation of Theorem 4.7 in its fuller generality. Both
Theorems 4.7 and 4.8 are of independent interest in view of our
approach to Michael's problem.
\begin{thm} \label{Thm 5.2_Patel} Let $\beta\,:\,{\cal
U}^{(1)}\,\rightarrow\,\F$ be a continuous linear map. Then there
is a continuous, unital homomorphism $\bar{\beta}\,:\,{\cal
U}\,\rightarrow\,\F$, extending $\beta$.
\end{thm}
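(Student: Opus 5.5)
The idea is to use the universal property of polynomials in the generators $X_i$, and then to pass to completions using the continuity of $\beta$ together with the weighted $\ell^1$ structure of ${\cal U}$. Set $f_i\,=\,\beta(X_i)\in\F$ for $i\in\nn$. To land in $\F$ through a composition of power series, the $f_i$ should lie in the maximal ideal $M$ of $\F$. So the first step is to reduce to that case. We replace $\beta$ by $\beta - \pi_0\circ\beta\cdot 1$ and absorb the constants by the translation $X_i\mapsto X_i + \pi_0(f_i)$. This translation is admissible on ${\cal U}$ only when $(\pi_0(f_i))\in\ell^\infty$. That holds because $\pi_0\circ\beta$ is a continuous functional on ${\cal U}^{(1)}$, hence bounded by some $C\,q_m$, which gives $|\pi_0(f_i)|\leq C m$. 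Translation by a point of $\ell^\infty$ is a continuous automorphism of ${\cal U}=\textrm{Hol}(\ell^\infty)$, so from now on we may assume every $f_i\in M$.

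Next, define $\bar\beta$ on the dense subalgebra $\cc[X_1,X_2,\dots]$ by $\bar\beta(\sum\beta_tX^t)=\sum\beta_t f^t$, where $f^t=f_{t_1}\cdots f_{t_m}$. This is a unital homomorphism extending $\beta$ on $\textrm{span}\{X_i\}$, and by continuity it will extend $\beta$ on all of ${\cal U}^{(1)}$ once $\bar\beta$ is shown to be continuous. The core estimate concerns the coordinate functionals $\pi_k$ on $\F$ for fixed $k$. Since $f_i\in M$, a monomial $f^t$ with $|t|>k$ satisfies $\pi_k(f^t)=0$. For $|t|\leq k$, the value $\pi_k(f^t)$ is a polynomial in the coefficients $\pi_j(f_{t_l})$ with $j\leq k$. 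Continuity of $\beta$ gives, for each $j\leq k$, constants $C_j,m_j$ with $|\pi_j(f_i)|\leq C_j m_j$ uniformly in $i$, using $q_{m}(X_i)=m$. Let $C=\max_{j\leq k}C_j$ and $m=\max_{j\leq k} m_j$. Expanding the product then gives a bound $|\pi_k(f^t)|\leq K_k (Cm)^{|t|}$, with $K_k$ depending only on $k$ (a count of compositions of $k$ into at most $k$ parts).

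With this bound in hand we sum over monomials. We get $|\pi_k(\bar\beta(f))|\leq K_k\sum_{|t|\leq k}|\beta_t|(Cm)^{|t|}\leq K_k\,q_{m'}(f)$ for any integer $m'\geq Cm$. This holds because $q_{m'}(f)=\sum_t|\beta_t|m'^{|t|}$ is exactly the weighted norm of ${\cal U}=\ell^1(S,W)$ from \S 3. Therefore each $\pi_k\circ\bar\beta$ is $q_{m'}$-continuous on the polynomials, and since $\F$ carries the coordinatewise topology, $\bar\beta$ is continuous. It then extends uniquely to a continuous linear map on the completion ${\cal U}$. Multiplicativity passes to the limit because multiplication in both algebras is jointly continuous. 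The extension agrees with $\beta$ on ${\cal U}^{(1)}$ since both maps are continuous and they coincide on the dense span of the $X_i$. Finally, we undo the translation from the first step.

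The main obstacle is the uniform bound on $\pi_j(f_i)$ in $i$. Everything rests on ${\cal U}^{(1)}$ carrying the norms $q_m(\sum\alpha_iX_i)=\sum|\alpha_i|m$ from (\ref{U1}), so that continuity of $\beta$ forces $\sup_i|\pi_j(\beta(X_i))|<\infty$. The translation step needs the same bound at $j=0$. If the constant-term reduction proves awkward, a fallback is to work directly with the Taylor expansion of elements of ${\cal U}$ about the point $(\pi_0(f_i))\in\ell^\infty$.
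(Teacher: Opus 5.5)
Your argument is correct, and it takes a genuinely different, more elementary route than the paper.

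The paper writes out no proof of Theorem 4.8. It refers to a digression of the proof of [15, Theorem 12.4]. The sketch in the proof of Corollary 4.9 defines $\bar\beta$ directly on all of ${\cal U}$, one homogeneous component at a time: it applies tensor products $\beta_{(j_1)}\otimes\cdots\otimes\beta_{(j_n)}$ of the coordinate functionals $\beta_{(j)}=\pi_j\circ\beta$ to $\epsilon_n(f^{(n)})$ and sums over index tuples. Multiplicativity and continuity are again deferred to [15].

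You instead define the substitution homomorphism $X_i\mapsto\beta(X_i)$ on polynomials, prove the coordinatewise bound $|\pi_k(\bar\beta(f))|\leq K_k\,q_{m'}(f)$ by counting compositions, and extend by density. The resulting map is the same. Your version, however:
\begin{itemize}
\item is self-contained;
\item isolates exactly how continuity of $\beta$ is used, namely through $\sup_i|\pi_j(\beta(X_i))|<\infty$ for each $j$;
\item handles nonzero constant terms $\pi_0(\beta(X_i))$, which the statement allows. These are excluded by the template of Theorem 4.7 (maps into ${\cal M}_2$) and not discussed in the paper's sketch.
\end{itemize}
What the paper's tensor formulation buys is independence from extension by continuity. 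With ``tensor products by rows'' in place of ordinary ones, the same degree-by-degree construction covers Theorem 4.7, where $\beta$ itself may be discontinuous and your density argument is unavailable.

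Two small points should be written out in your translation step, with $c=(\pi_0(\beta(X_i)))_i\in\ell^\infty$ and $T_a\colon X_i\mapsto X_i+a_i$.
\begin{itemize}
\item Continuity of $T_a$ needs the check $q_m(T_af)\leq q_{m'}(f)$ for integers $m'\geq m+\|a\|_\infty$, obtained by expanding $(X+a)^t$ binomially.
\item The agreement of $\bar\beta'\circ T_c$ with $\beta$ on ${\cal U}^{(1)}$ uses $\pi_0(\beta(u))=\sum_i\alpha_ic_i$ for $u=\sum_i\alpha_iX_i$, i.e.\ continuity of $\pi_0\circ\beta$.
\end{itemize}

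The reduction to $M$ can in fact be dropped. Without it, $k$ has $\binom{k+n-1}{n-1}\leq 2^{k+n-1}$ compositions into $n=|t|$ nonnegative parts. Hence $|\pi_k(f^t)|\leq 2^k(2A)^{|t|}$ with $A=\max_{0\leq j\leq k}\sup_i|\pi_j(\beta(X_i))|$, and so $|\pi_k(\bar\beta(f))|\leq 2^k q_{m'}(f)$ for integers $m'\geq 2A$. The weights $m^{|t|}$ of ${\cal U}$ absorb the constant terms directly.
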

As opposite to Theorem 4.6 above, we provide a much shorter and
elegant way to embed $\cal U$ into $\F$ in the following corollary
(cf. ~\cite[Theorem 10.1]{15}).
\begin{cor} \label{Thm 5.3_Patel} The Fr\'{e}chet algebra $\cal U$ is (isometrically isomorphic to)
a Fr\'{e}chet algebra of power series.
\end{cor}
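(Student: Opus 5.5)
The plan is to read the corollary off Theorem 4.8, combined with the injectivity argument already carried out in the proof of the main theorem.

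First, define a continuous linear map $\beta : {\cal U}^{(1)} \rightarrow \F$ by prescribing its values on the generators, $\beta(X_i) = g_i$. Here $(g_i)$ is the sequence of \eqref{10.5}, with $g_1 = X$, $o(g_i) \geq i$, and coefficients governed by the rapidly increasing sequence $(c_j)$ satisfying \eqref{10.3} and \eqref{10.4}. We need $\beta(\sum \alpha_i X_i) = \sum \alpha_i g_i$ to converge in $(\F,\tau_c)$ and to depend continuously on the element. Since $o(g_i) \geq i$, each coordinate $\pi_n(\sum \alpha_i g_i)$ involves only finitely many $i$, namely $i \leq n$. Hence it is a finite linear combination of the coefficients $\alpha_i$, and each such coefficient functional is bounded by $q_1$ restricted to ${\cal U}^{(1)}$. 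So $\beta$ is continuous from $({\cal U}^{(1)},(q_m))$ to $(\F,\tau_c)$; this step is routine.

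Next, apply Theorem 4.8 to obtain a continuous unital homomorphism $\bar\beta : {\cal U} \rightarrow \F$ extending $\beta$. On a monomial $X_{t_1}\cdots X_{t_m}$ it is necessarily $g_{t_1}\cdots g_{t_m}$. By continuity and density of polynomials in ${\cal U}$, for every $f = \sum \beta_t X_{t_1}\cdots X_{t_m} \in {\cal U}$ we have $\bar\beta(f) = \sum_t \beta_t g_{t_1}\cdots g_{t_m}$, with the sum converging coordinatewise. Its range contains $\bar\beta(X_1) = X$, so $\cc[X] \subset \bar\beta({\cal U})$.

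The key step, and the expected main obstacle, is injectivity of $\bar\beta$. Here we would repeat verbatim the estimate from the proof of the main theorem. Given $f \neq 0$, normalize $q_1(f) = 1$ and pick $t$ with $\beta_t \neq 0$. Choose $k > 1/|\beta_t|$ with $(\gamma(k+1),\dots,\gamma(k+m)) = t$, and set $P$ and $Q$ as in Lemma \ref{lem_10.2}. Then \eqref{inequality 1} and \eqref{inequality 2} give $|\pi_P(\bar\beta(f))| \geq Q(|\beta_t| - 1/k) > 0$. The delicate point is interchanging $\pi_P$ with the infinite sum. This is legitimate because $\pi_P \circ \bar\beta$ is a continuous linear functional, so it is bounded by some $q_m$ and in particular by the $\ell^1$ structure. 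We then use the uniform bound of Lemma \ref{lem_10.2}(ii) over all $s$ not matching $t$, together with $\sum|\beta_s| = 1$.

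Finally, $\bar\beta$ is a continuous injective unital homomorphism onto a subalgebra $A = \bar\beta({\cal U})$ of $\F$ containing $X$. Transport the Fr\'echet topology $(q_m)$ to $A$ via $\bar\beta$. Then $A$ with this topology is a Fr\'echet algebra continuously embedded in $\F$ and containing $X$, that is, a FrAPS. The map $\bar\beta$ is by construction an isometric isomorphism of $({\cal U},(q_m))$ onto $(A,(q_m\circ\bar\beta^{-1}))$, which gives the stated corollary.
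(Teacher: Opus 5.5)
Your proof is correct, up to the small repairs below. It shares the paper's skeleton: a linear map on the generators, extended to a continuous unital homomorphism ${\cal U}\to\F$, shown to be injective, with the topology transported to the image. It differs at the decisive point, which is the choice of $\beta$.

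The paper takes the $\beta_{(i)}$ to be the coordinate functionals, so that $\beta(X_i)=X^i$. It then extends via \eqref{formula-2} and defers injectivity to \cite[Theorem 9.1]{15}. With that choice no homomorphic extension can be injective, since $\bar\beta(X_1^2-X_2)=X^2-X^2=0$.

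Your choice $\beta(X_i)=g_i$, with the rapidly growing coefficients of \eqref{10.3}--\eqref{10.5}, is precisely what makes Lemma~\ref{lem_10.2} available. What you have written is in effect the original proof of \cite[Theorem 10.1 (ii)]{15}. It is not the shortcut the paper is after, but it is a version in which injectivity is actually proved.

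You are also right that interchanging $\pi_P$ with the infinite sum is the delicate step. It is legitimate here because $\bar\beta$ is continuous. That is exactly what is not available for the discontinuous $\theta$ in \S3, where the same estimate is used.

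Three small points remain.

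\emph{First}, Lemma~\ref{lem_10.2} also requires $k>m$. Since infinitely many $k$ satisfy \eqref{10.2}, take $k>\max(m,1/|\beta_t|)$.

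\emph{Second}, you can dispense with Theorem~\ref{Thm 5.2_Patel}, whose proof the paper omits. Every $g_i$ lies in $M$ and $o(g_i)\ge i$, so only the finitely many monomials $X_{s_1}\cdots X_{s_n}$ with $n\le P$ and all $s_j\le P$ contribute to $\pi_P$. Consequently:
\begin{itemize}
\item $\bar\beta(f)=\sum_s\beta_s g_{s_1}\cdots g_{s_n}$ is defined coordinatewise by finite sums;
\item each $\pi_P\circ\bar\beta$ is a finite combination of coefficients of $f$, hence $q_1$-continuous;
\item multiplicativity is a finite computation, and the interchange of $\pi_P$ with the sum is automatic.
\end{itemize}
Since only $q_1$ is involved, the same map also embeds ${\cal U}_1=\ell^1(S)$ at no extra cost.

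\emph{Third}, there is a problem with the construction as transcribed in \S3. There $E_1=\{1\}$, which gives $g_1=b_1X$. Then Lemma~\ref{lem_10.2}(i) fails for tuples containing the index $1$: for $r=(1)$ one gets $\pi_{a_{k+1}}(g_1)=0<Q$. Before invoking the lemma for arbitrary $t$ you must repair the indexing. One way is to take $E_1=\{j:\gamma(j)=1\}$, which is also what makes $\{E_i\}$ an actual partition. Then $g_1$ merely has order one, and you finish by composing with a continuous automorphism of $\F$ taking $g_1$ to $X$, as in \S3.
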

\noindent{\it Proof.} First, we remark that we again require to
take further digression in the proof of Theorem 4.8 as follows.
For each $i\,\in\,\zz^+$, we take $\beta_{(i)}$ to be the usual
coordinate linear functional on ${\cal U}^{(1)}$. We then define a
mapping $$\beta\,:\,{\cal U}^{(1)}\,\rightarrow\,\F \textrm{such
that} \beta(f)\,=\,\sum_{i=0}^\infty\,\beta_{(i)}(f)X^i.$$
Clearly, such a $\beta$ is continuous and injective, and
$\beta(X_1)\,=\,X$. We extend each $\beta_{(i)}$ to a linear
functional $\beta_{(i)}$ on $\F_\infty^{(1)}$. Next we define a
linear functional $\beta_{(i)}^{(n)}$ on $\F_\infty^{(n)}$ for
each $n\,\in\,\nn$ by the following formula:
\begin{equation} \label{formula-1} \beta_{(i)}^{(n)}(f)\,=\,\sum\{(\beta_{i^{(1)}}\,\otimes\,\cdots\,\otimes\,\beta_{i^{(n)}})(\epsilon_n(f))\}\;\;(f\,\in\,\F_\infty^{(n)}),
\end{equation}
where the sum is taken over all $n$-tuples $(i) =
(i^{(1)},\,\dots\,i^{(n)}) \in (\zz^+)^n$ such that\linebreak
$i^{(1)} + 2i^{(2)} + \cdots + ni^{(n)} = \omega((i))$, a weighted
order of $(i)$ ~\cite[pp. 137-139]{15}. We now {\it claim} that
the map $\bar{\beta}\,:\,{\cal U}\,\rightarrow\,\F$, defined for
$f\,\in\,\F_\infty$ by the formula
\begin{equation} \label{formula-2} \bar{\beta}(f)\,=\,\sum_{k=0}^{\infty}\{(\sum\{\beta_{(i)}^{(n)}(f^{(n)}\,:\,n\,\in\,\nn_i\})X^i\,:\,\omega((i))\,=\,k\},
\end{equation}
where we set $\bar{\beta}^{(0)}(f)\,=\,f(0)1$, is a unital
homomorphism $\bar{\beta}\,:\,{\cal U}\,\rightarrow\,\F$
satisfying the stated conditions. Follow the proof of
~\cite[Theorem 12.4]{15} to show that $\bar{\beta}$ is a unital
homomorphism that extends $\beta$. Again follow the proof of
~\cite[Theorem 12.4]{15} to show that $\bar{\beta}$ is continuous,
where we use the continuity of coordinate linear functionals
$\beta_{(i)}$ and the fact that the 'tensor product by rows'
~\cite[Definition 1.9]{37} agrees with the usual tensor product
when the linear functionals are continuous. Finally, we need to
show that $\bar{\beta}$ is injective. Now, follow the proof of
~\cite[Theorem 9.1]{15} for this to derive. Thus the algebra $\cal
U$ is continuously embedded in $\F$. In this case,
$\bar{\beta}({\cal U})$ is a FrAPS in $\F$, w.r.t. the metric
transferred from $\cal U$, and so $\cal U$ is isometrically
isomorphic to a FrAPS. $\hfill \Box$

If we replace $\F$ by $\cc$ in the codomain in Theorem 4.8, then a
similar argument in the proof above also enables us to extend a
continuous linear functional $\beta\,=\,\beta_{(0)}$ on ${\cal
U}^{(1)}$ (resp., on $\ell^1(\zz^+)$) to a continuous character
$\bar{\beta}$ on $\cal U$ (resp., on the Banach algebra ${\cal
U}_1$) as follows. In this case, we then define a mapping
$\beta\,:\,{\cal U}^{(1)}\,\rightarrow\,\cc$ such that
$\beta(f)\,=\,\beta_{(0)}(f)$. Clearly, such a $\beta$ is
continuous and $\beta(X_1)\,=\,0$. We extend $\beta_{(0)}$ to a
linear functional $\beta_{(0)}$ on $\F_\infty^{(1)}$. Next we
define a linear functional $\beta_{(0)}^{(n)}$ on
$\F_\infty^{(n)}$ for each $n\,\in\,\nn$ by the following formula:
\begin{equation} \label{formula-1} \beta_{(0)}^{(n)}(f)\,=\,(\beta_{(0)}\,\otimes\,\cdots\,\otimes\,\beta_{(0)})(\epsilon_n(f))\;\;(f\,\in\,\F_\infty^{(n)}),
\end{equation}
where the $n$-tuple $(0) = (0,\,\dots,\,0) \in (\zz^+)^n$. We now
claim that the map $\bar{\beta}\,:\,{\cal U}\,\rightarrow\,\cc$,
defined for $f\,\in\,\F_\infty$ by the formula (4.4) above, where
we set $\bar{\beta}^{(0)}(f)\,=\,f(0)1$, is a character
$\bar{\beta}\,:\,{\cal U}\,\rightarrow\,\cc$ satisfying the stated
conditions. Follow the proof of ~\cite[Theorem 12.4]{15} to show
that $\bar{\beta}$ is a character that extends $\beta$. Again
follow the proof of ~\cite[Theorem 12.4]{15} to show that
$\bar{\beta}$ is continuous; in the process, one can either
consider `tensor product by rows', or usual tensor product of a
continuous linear functional $\beta$, it makes no difference.

The main point should be emphasized here: if one starts with a
discontinuous character on $\cal U$, then one obtains a
discontinuous linear functional on ${\cal U}^{(1)}$. If one starts
with a continuous linear functional on ${\cal U}^{(1)}$, then one
obtains a continuous character on $\cal U$. In particular, if one
starts with the usual coordinate linear functional
$\beta\,=\,\beta_{(0)}$ on ${\cal U}^{(1)}$, then, by Corollary
4.9 (or, ~\cite[Theorem 10.1 (ii)]{15}) and the above arguments,
one obtains the character $\pi_0$ on $\cal U$, which is continuous
~\cite[Corollary 11.5]{15}.

Further, we recall some interesting facts. By Theorem 1.1,
$\F_\infty$ cannot be embedded into $\F$ either algebraically or
topologically whereas, by Corollary 4.9 (or, ~\cite[Thoerem
10.1]{15}), the test algebra $\cal U$ (resp., the Banach algebra
${\cal U}_1\,=\,\ell^1(S)$), which is a FrAPS (resp., BAPS) in
$\F_\infty$, is (isometrically isomorphic to) a FrAPS (resp.,
BAPS) in $\F$. However, when we replace $\F$ by $\F_2$ in the
codomain in these results, then the scenario is totally opposite.
That is, by ~\cite[Theorem 9.1]{15}, $\F_\infty$ can be embedded
into $\F_2$ whereas, by Theorem 4.6 (resp., ~\cite[Theorem
12.3]{15}), the test algebra $\cal U$ (resp., the Banach algebra
${\cal U}_1$) is not a FrAPS (resp., a BAPS) in $\F_2$.

We can now complete the proof of Theorem 4.6.

Theorem 4.7 shows that $\psi$ can be extended to a homomorphism
$\Psi{'}\,:\,{\cal U}\,\rightarrow\,\F_2$ such that
$\pi\,\circ\,\Psi{'}\,=\,\theta$. The map $\Psi{'}$ is an
embedding because $\theta$ is injective, and it is manifest that
$\Psi{'}$ is discontinuous. It is clear that $\Psi{'}({\cal U})$
contains $\F$ and the element $\Psi{'}(\sum_{i = 2}^\infty
X_i/i^2)$, which, by a remark in the proof of ~\cite[Theorem
10.1]{15}, has the form $X^2f + Y$ for some $f\,\in\,\F$. By
~\cite[Lemma 1.2]{15}, there is a continuous, unital automorphism
$\chi$ of $\F_2$ such that $\chi(X) = X$ and $\chi(X^2f + Y) = Y$.
So $\chi \circ \Psi{'} : {\cal U} \rightarrow \F_2$ is a
discontinuous embedding whose rang contains $\cc[X, Y]$, as
required for the proof of the theorem (where we identify $\cal U$
with its image in $\F_2$ under $\chi \circ \Psi{'}$). $\hfill
\Box$\\

\noindent {\bf Acknowledgements.} We take this opportunity to
remember and thank significant mathematicians, who contributed to
Michael's problem in the past and who, unfortunately, passed away
before this work comes in existence, especially, Stanis\l aw Mazur
(1981), Richard Arens (2000), Barry Johnson (2002), Graham Allan
(2007), Ernest Michael (2013), Charles Read (2015), Jorge Mujica
and Marc Thomas (2017), Berit Stensones and Garth Dales (2022),
Jean Esterle (2023) and Wieslaw \.Zelazko (2025).

\end{document}